\documentclass[opre,nonblindrev]{informs3} 

\usepackage{caption}
\usepackage{anyfontsize}
\usepackage{subfiles}
\usepackage{xr}
\usepackage{float}
\def\biblio{\bibliographystyle{plainnat}\bibliography{../DataDrivenRobustBibliography}}

\usepackage{accents}

\usepackage{natbib}
 \bibpunct[, ]{(}{)}{,}{a}{}{,}%
 \def\bibfont{\small}%
\TheoremsNumberedThrough     
\ECRepeatTheorems
\EquationsNumberedThrough    

\RequirePackage[shortlabels]{enumitem}
\usepackage{mathtools} 
\usepackage{enumitem} 
\usepackage{dsfont} 
\usepackage{mathrsfs}  

\newtheorem{innercustomgeneric}{\customgenericname}
\providecommand{\customgenericname}{}
\newcommand{\newcustomtheorem}[2]{%
	\newenvironment{#1}[1]
	{%
		\renewcommand\customgenericname{#2}%
		\renewcommand\theinnercustomgeneric{##1}%
		\innercustomgeneric
	}
	{\endinnercustomgeneric}
}

\newcustomtheorem{customthm}{Theorem}
\newcustomtheorem{customlemma}{Lemma}

\allowdisplaybreaks

\usepackage{epsfig}

\usepackage[ruled, vlined, linesnumbered]{algorithm2e}
\let\oldnl\nl
\newcommand{\nonl}{\renewcommand{\nl}{\let\nl\oldnl}}

\usepackage{IEEEtrantools}
\usepackage{multicol}
\usepackage{caption}
\usepackage{subcaption}

\usepackage[margin=2.54cm]{geometry}
\usepackage{lipsum}
\usepackage{bigints}
\usepackage{framed}
\usepackage{colortbl}
\usepackage{calligra}
\usepackage[export]{adjustbox}
\usepackage{booktabs}
\usepackage{multirow}
\usepackage{makecell}

 \usepackage{wrapfig}

\usepackage{bbm}
\usepackage[normalem]{ulem}

\usepackage[colorlinks=true,linkcolor=black,citecolor=black]{hyperref}
\usepackage{cleveref}

\newcommand{\R}{{\mathcal R}}

\def\RR{ {\mathbb{R}}}

\def \bE {\mathbb{E}}
\def \bR {\mathbb{R}}

\newcommand{\rr}[1]{{\textcolor{red}{[RC: #1]}}}

\let\R\Real

\renewcommand{\qed}{{\hfill\halmos}}

\def \bE {\mathbb{E}}
\def \bR {\mathbb{R}}

\usepackage{graphicx}
\usepackage{tikz,ifthen}

   \usetikzlibrary{math}
   \usetikzlibrary{shapes.misc}
   \usetikzlibrary{shapes.multipart,positioning,patterns,backgrounds}

\usepackage{multirow}
\usepackage{array}
\usepackage{cleveref} 

\usepackage{booktabs}
\usepackage{multirow}
\usepackage{makecell}

\usepackage{array}
\newcolumntype{x}[1]{>{\centering\arraybackslash}p{#1}}

\begin{document}

\def\biblio{}




\RUNTITLE{Smooth Learning via Legendre-Regularized Policies}

\ARTICLEAUTHORS{Zikun Lin\thanks{Department of Systems Engineering and Engineering Management, The Chinese University of Hong Kong, Hong Kong, China. Email: {\tt zklin@se.cuhk.edu.hk}.},  
Rui Chen\thanks{School of Data Science, The Chinese University of Hong Kong, Shenzhen, Guangdong, China. Email: {\tt rchen@cuhk.edu.cn}.},
and
Yijie Wang\thanks{School of Economics and Management, 
Tongji University, Shanghai, China. Email: {\tt yijiewang@tongji.edu.cn}.}
}

\TITLE{Smooth Learning with Hard Constraints via Legendre-Regularized Policies}

\ABSTRACT{
We revisit contextual optimization from the perspective of policy class design. A desirable policy class should be expressive enough to learn rich context-decision relationships, should enforce hard feasibility constraints rather than soft penalty terms, and should remain smooth enough for gradient-based training on downstream decision losses. Existing approaches usually emphasize only part of these requirements. We propose Legendre-regularized policies, which parameterize decisions as solutions of regularized optimization problems over the original feasible region. This construction yields policies that are feasible by construction and differentiable with respect to learned latent parameters. We prove that the associated optimizer map is single-valued, maps onto the relative interior of the feasible set, admits an explicit Jacobian, is Lipschitz continuous, and can be made arbitrarily smooth. We also establish a universal approximation result showing that the proposed class can approximate any continuous feasible policy on compact context sets. The framework unifies explicitly regularized optimizers and implicit perturbation-based smooth optimizers. Experiments on contextual newsvendor and resource allocation problems show that our approach improves prescriptive performance relative to the benchmark methods.
}
\maketitle

\section{Introduction}
\label{sec:introduction}

A central problem in data-driven decision-making is to learn a policy that maps contextual information to a feasible decision with low downstream cost. In population form, this problem can be written as
\begin{align}\label{generic_CO}
    \min_{\pi\in \Pi} \bE_{(x,\xi)} \ell(\pi(x);\xi),
\end{align}
where $x \in \RR^{k_1}$ denotes the side information that is available before making the decision, $\xi \in \RR^{k_2}$ denotes the uncertain parameter governing downstream performance, $\ell(\cdot;\xi):S\rightarrow \bR$ is the cost function, $S\subseteq \bR^{n}$ is a set representing the deterministic feasible region, and $\Pi$ is a class of policies mapping each context $x$ to a feasible decision $w \in  S$. The key question is therefore how to design the policy class in \eqref{generic_CO}. A desirable policy class must satisfy three requirements at once: it must be expressive enough to capture the dependence of good decisions on context, it must enforce the hard constraints (as opposed to soft constraints or penalized violations) defining $S$, and it must be smooth enough for gradient-based training. 

Existing approaches emphasize different parts of these requirements. The decision rule approach seeks to learn a direct mapping from contextual covariates to optimal decisions, optimizing an empirical risk metric over a predefined hypothesis class~\citep{brandt2009parametric,ban2019big}. This approach is natural from a policy-learning perspective, but feasibility is difficult to guarantee for flexible models when $S$ is high-dimensional or polyhedral. Predict-then-optimize methods take the opposite route: they preserve the downstream optimization model, but train the statistical model for prediction rather than for the decision loss~\citep{bertsimas2020predictive,bertsimas2023dynamic,kannan2024residuals,kannan2025data,wang2026data}. Decision-focused learning attempts to combine both advantages by training the predictive model through the downstream decision loss~\citep{amos2017optnet,agrawal2019differentiable}. However, when the decision is obtained by solving an optimization problem, the induced optimizer map is often non-smooth or set-valued. For example, in linear optimization over a polyhedral feasible region, the map from predicted costs to optimal decisions is piecewise constant, leading to zero or undefined gradients in large regions of the parameter space~\citep{bolte2021nonsmooth}. This has motivated surrogate losses and differentiable optimization layers~\citep{berthet2020learning,elmachtoub2022smart}, as well as hard-constrained feasibility maps that project or reparameterize raw model outputs into the feasible set~\citep{chen2023end,schneider2026soft}.

This paper proposes a policy class that meets the three requirements by making the optimizer itself smooth. We study \emph{Legendre-regularized policies} of the form
\[
    \pi_g(x)=w_{F,\phi}(g(x)),\qquad
    w_{F,\phi}(z)\in\arg\min_{w\in S}\{\langle Fz,w\rangle+\phi(w)\}.
\]
Here $g$ is a learned model, and $\phi$ is a fixed Legendre-type regularizer chosen so that the resulting optimizer is well-defined and smooth. We formalize the required conditions on $\phi$ through the notion of an \emph{admissible Legendre regularizer} in Section~\ref{subsec:regularizer}. The output of $g$ is not an infeasible candidate decision to be repaired, nor does it need to be interpreted as a prediction of $\xi$. Instead, it serves as latent intermediate parameters, which are coefficients of a regularized optimization problem whose solution is the prescribed decision.

The resulting solution map has the desired property for learning. Under mild conditions (defining admissible Legendre regularizers), it is single-valued, lies in the relative interior $S^\circ$, is differentiable with an explicit Jacobian, and is Lipschitz continuous in the latent parameters. With smoother regularizers, the policy map can inherit higher-order smoothness, which gives continuous and stable sensitivities of the decision with respect to the learned latent parameters. This is stronger than the almost-everywhere differentiability available from many projection-based maps: first-order methods can often still be applied in the latter case, but gradients may not exist and may change discontinuously across active regions~\citep{schneider2026soft}. In contrast, the Jacobian of $w_{F,\phi}$ is controlled by the regularizer through the Hessian of its convex conjugate, yielding a more structured object for backpropagation, sensitivity analysis, and higher-order optimization methods. At the same time, one can show that parameterizing decisions as solutions of regularized optimization problems does not sacrifice expressive power: since the image of $w_{F,\phi}$ is $S^\circ$, composing it with a universal approximator yields a policy class that can uniformly approximate any continuous feasible policy on a compact context set.

The framework also unifies several smoothing mechanisms. Standard choices, such as logarithmic or entropic regularization, give admissible Legendre regularizers for the feasible regions, and lead to optimization layers with tractable derivatives.  Meanwhile, perturbation-based smooth optimizers~\citep{berthet2020learning} can also be interpreted as \emph{implicit} Legendre-regularized policies. This extends the framework beyond analytically specified regularizers to oracle-based smoothing schemes, which is useful when an explicit regularization is inconvenient or inefficient but an optimization oracle over $S$ is available. 


The contribution of our paper can be summarized as follows:
\begin{enumerate}
    \item We propose a Legendre-regularized policy framework for decision-focused learning. We show that the induced solution map is single-valued, feasible by construction, continuously differentiable with an explicit Jacobian, Lipschitz continuous, and maps onto the relative interior of $S$. We also show that our Legendre-type assumptions on the regularizer are necessary for these desirable properties of the corresponding solution map.
    \item We provide a unified framework that integrates existing explicit and implicit smooth optimizers. In particular, both analytically specified Legendre regularizers and perturbation-induced regularizers can be treated as instances of our proposed Legendre-regularized policies.
    \item We show that this Legendre-regularized policy framework preserves expressive power: when the latent model class $\mathcal G$ is a universal approximator, Legendre-regularized policies can uniformly approximate any continuous feasible policy on a compact context set.
    \item We conduct extensive experiments on contextual newsvendor and resource allocation problems to demonstrate the effectiveness of the proposed framework.
\end{enumerate}

\noindent \textbf{Notations}.
For a set $S$, we denote by $\operatorname{aff}(S)$ its affine hull, $\operatorname{cl}(S)$ its closure, $S^\circ$ its relative interior, and $\operatorname{int}(S)$ its interior when the ambient space is clear, and we denote by $\mathbb I_S(\cdot)$ and $\mathds{1}_S(\cdot)$ its associated extended-valued and 0-1 indicator functions, respectively, defined as $\mathbb I_S(w)=0$ and $\mathds{1}_S(w)=1$ if $w\in S$, and $\mathbb I_S(w)=+\infty$ and $\mathds{1}_S(w)=0$ otherwise. For an extended-real-valued function $\phi:\R^n\rightarrow\R\cup\{-\infty,+\infty\}$, $\operatorname{dom}(\phi)$ denotes its effective domain, i.e., $\operatorname{dom}(\phi):=\{w\in\R^n:\phi(w)<+\infty\}$, and $\phi^*$ denotes its convex conjugate, i.e., $\phi^*(\cdot):=\sup_w \{\langle \cdot,w\rangle-\phi(w)\}$. 
For an integer $k\geq 1$ and a set $X$, $C^k(X)$ denotes the class of functions with continuous (partial) derivatives up to order $k$ on $X$, and $C^\infty(X)$ denotes smooth functions with continuous (partial) derivatives of all orders on $X$. Let $\mathcal{N}(\mu,\sigma^2)$ denote the Gaussian distribution with mean $\mu$ and variance $\sigma^2$, and $\Phi(\cdot)$ denote the cumulative distribution function of $\mathcal{N}(0,1)$.


\subsection{Related Literature}

\paragraph{Policy learning.} A large literature studies data-driven policies that map covariates directly to decisions, including parametric decision rules~\citep{brandt2009parametric,ban2019big,bazier2020generalization}, piecewise-affine decision rules~\citep{zhang2026data}, kernel and forest-based prescriptive methods~\citep{bertsimas2022data,kallus2023stochastic}, and neural decision rules~\citep{zhang2020deep,oroojlooyjadid2020applying,qi2023practical}. However, policy learning methods do not automatically ensure feasibility; therefore, a recent line of work aims to enforce feasibility by composing an unconstrained predictor with a projection or radial map onto the feasible set~\citep{chen2023end,schneider2026soft}. Our construction shares this goal but uses a different mechanism: instead of geometrically correcting a raw decision, we parameterize feasible decisions as solutions of Legendre-regularized optimization problems. This optimizer-based parameterization yields Jacobians governed by the regularizer through convex duality. Moreover, with sufficiently smooth Legendre regularizers, the resulting policy map can be made smooth to higher order, and in standard smooth-regularizer cases even $C^\infty$, which is stronger than the almost-everywhere differentiability typical of projection-based maps over polyhedral sets.

\paragraph{End-to-end learning.} 
End-to-end learning, also known as decision-focused learning, trains predictive models through the downstream optimization problem so that model parameters are chosen for decision quality rather than prediction accuracy alone~\citep{donti2017task,wilder2019melding,elmachtoub2022smart}. A common approach is to differentiate through an optimization layer, as in differentiable quadratic or convex optimization layers~\citep{amos2017optnet,agrawal2019differentiable}. This strategy is powerful when the original solution map is smooth; however, many decision problems have non-smooth or set-valued optimizers. In linear optimization over a polyhedral feasible region, for example, the map from predicted costs to optimal decisions is piecewise constant and provides little useful gradient information. 
Existing work addresses this issue through surrogate losses such as SPO+ and related decision-aware objectives~\citep{elmachtoub2020decision,mandi2020smart,elmachtoub2022smart,loke2022decision,liu2021risk}, or through perturbed and Fenchel--Young smooth optimizers~\citep{berthet2020learning}. Many such surrogates are tailored to particular downstream structures, most notably linear objectives in the Smart Predict-then-Optimize (SPO) framework \citep{mandi2020smart, elmachtoub2022smart}. Our approach shifts the focus from designing a surrogate loss for a fixed non-smooth optimizer to designing a smooth feasible policy class directly. Because the smoothing is applied to the decision map rather than to a problem-specific loss surrogate, Legendre-regularized policies can be trained on general (sub)differentiable decision losses.

\paragraph{Regularization as smoothing.}
Regularization is a widely used mechanism for smoothing non-smooth optimization problems. For non-smooth convex functions that can be represented as pointwise maxima, \cite{nesterov2005smooth} shows that adding a strongly convex function to the inner optimization problem yields a smooth approximation with a Lipschitz-continuous gradient. Related regularization-based smoothing techniques have also been studied in online linear optimization~\citep{abernethy2014online}. In structured prediction, SparseMAP adds a quadratic regularizer to maximum a posteriori (MAP) inference to obtain a unique solution map that is differentiable almost everywhere~\citep{niculae2018sparsemap}, while the Fenchel--Young framework provides a general treatment of prediction maps induced by convex regularizers~\citep{blondel2020learning}. 
In decision-focused learning, quadratic and $\ell_2$-norm regularizers have been used to smooth linear optimizer maps~\citep{wilder2019melding,wang2026machine}. These methods address their zero or undefined gradients and enable differentiation through the KKT conditions. Our work builds on this regularization-as-smoothing principle but focuses on the design of the resulting policy class.  In particular, Legendre regularization ensures that every latent parameter produces a solution in the relative interior of the feasible region. The resulting solution map covers the entire relative interior and is differentiable everywhere, and can be made arbitrarily smooth under stronger regularity conditions.

\section{Legendre-Regularized Policies}\label{sec:legendre_regularized_policies}

We now introduce a class of contextual decision policies that are feasible by construction and differentiable with respect to learned parameters. The basic idea is to avoid learning decisions directly. Instead, the learned model outputs parameters of a regularized optimization problem, and the policy is obtained by solving this optimization problem over the original feasible region.

Let $g:\mathcal X\to\mathbb R^k$ be a prediction model, where $\mathcal X$ denotes the context space. Given a fixed feasible region $S$, a classical plug-in policy for a linear downstream objective has the form
\[
x\mapsto w^\star(g(x)),
\qquad
w^\star(z)\in \arg\min_{w\in S} z^\top w.
\]
When $S$ has a non-smooth boundary (e.g., $S$ is polyhedral), the map $z\mapsto w^\star(z)$ is generally set-valued and non-smooth. This creates difficulties for gradient-based end-to-end training. Our approach replaces the non-smooth optimizer $w^\star(\cdot)$ with a smooth regularized optimizer.

Throughout this section, assume that the deterministic feasible region $S$ is a nonempty closed convex set with its affine hull aff$(S)=\{w\in\R^n:Aw=b\}$, where $A\in\R^{p\times n}$ has full row rank and $0\leq p\leq n-1$. Note that, in the case when $S$ is full dimensional, aff$(S)=\R^n$, $A$ is absent, and $p=0$. 

Let $r=n-p$, and let $N\in\mathbb R^{n\times r}$ be a matrix whose columns form an orthonormal basis of $\ker(A)$. Fix any point $\bar w$ in the relative interior $S^\circ$ of $S$. Then every feasible point $w\in$aff$(S)$ can be written uniquely as
$w=\bar w+Ny$,
for some $y\in \R^r$.
Define the reduced feasible region
\[
Y:=\{y\in\mathbb R^r:\bar w+Ny\in S^\circ\}.
\]
Then $Y$ provides a representation of $S^\circ$ up to an affine transformation.

\subsection{Admissible Legendre Regularizers}\label{subsec:regularizer}

We impose the following regularity conditions on the regularizer.

\begin{definition}
	A proper lower semicontinuous convex function $\phi:\mathbb R^n\to \mathbb R\cup\{+\infty\}$
	is called an \emph{admissible Legendre regularizer} for $S$ if $S^\circ=(\operatorname{dom}(\phi))^\circ$, and the reduced function
	\[
	\psi(y):=\phi(\bar w+Ny)
	\]
	satisfies the following conditions:
	\begin{enumerate}
		\item $\psi$ is $\mu$-strongly convex for some $\mu>0$;
		\item $\psi$ is essentially smooth (i.e., $\psi$ is differentiable on $Y$, and $\lim_{i\rightarrow \infty}|\nabla \psi(y^i)|=+\infty$ for all sequences $\{y^i\}_{i=1}^\infty\subseteq Y$ converging to any boundary point of $Y$);
		\item Its convex conjugate $\psi^*$ is twice differentiable on $\mathbb R^r$.
	\end{enumerate}
    \label{def:Legendre}
\end{definition}
In the full-dimensional case when $A$ is absent ($n=r$, $N=I_n$), conditions 1-3 reduce to the simpler conditions that $\phi$ is $\mu$-strongly convex, essentially smooth, and has a twice continuously differentiable convex conjugate. Note that admissible Legendre regularizers are \emph{convex functions of Legendre type} \citep{rockafellar1970convex} due to conditions 1 and 2.

Below we summarize some useful properties of admissible Legendre regularizers.
\begin{lemma}\label{lem:basic}
    Let $\psi:\mathbb R^r\to \mathbb R\cup\{+\infty\}$ be a $\mu$-strongly convex and essentially smooth function with $Y=\operatorname{int}(\operatorname{dom}(\psi))$. Then the following properties hold:\begin{enumerate}
        \item $\psi^*:\R^r\rightarrow\R$ is differentiable;
        \item $\nabla\psi^*:\R^r\rightarrow Y$ is $1/\mu$-Lipschitz continuous;
        \item $\nabla \psi$ is continuous and bijective from $Y$ to $\R^r$ with $(\nabla \psi)^{-1}=\nabla \psi^*$;
        \item For all $k\in\mathbb Z_+$, $\psi^*\in C^k(\R^r)$ if $\psi\in C^k(Y)$.
    \end{enumerate}
\end{lemma}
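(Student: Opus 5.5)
The plan is to rely on standard convex-analysis facts from \citet{rockafellar1970convex}: conjugacy, Legendre-type duality (Theorem 26.5), and the subgradient inversion rule. The proof goes item by item, in the order listed.

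\textbf{Items 1 and 2.} For each $z\in\R^r$, the function $y\mapsto \psi(y)-\langle z,y\rangle$ is proper, lower semicontinuous and $\mu$-strongly convex. Hence it is coercive and has a unique minimizer $y(z)$. So $\psi^*(z)$ is finite everywhere, and $\psi^*$ is a finite convex function on $\R^r$.

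By the inversion rule $\partial\psi^*=(\partial\psi)^{-1}$, we have $\partial\psi^*(z)=\{y: z\in\partial\psi(y)\}$. This is exactly the set of minimizers above, which is the singleton $\{y(z)\}$. A finite convex function whose subdifferential is a singleton everywhere is differentiable, so $\nabla\psi^*(z)=y(z)$.

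Essential smoothness gives $\partial\psi(y)=\emptyset$ for $y\notin Y$ (Rockafellar Thm.~26.1). Therefore $y(z)\in Y$, and $\nabla\psi^*$ maps into $Y$.

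For the Lipschitz bound, take $z_i\in\partial\psi(y_i)$ for $i=1,2$. Strong monotonicity of $\partial\psi$ gives
\[
\langle z_1-z_2,y_1-y_2\rangle\ge\mu\|y_1-y_2\|^2 .
\]
Cauchy--Schwarz then yields $\|y_1-y_2\|\le \|z_1-z_2\|/\mu$, which is the claimed $1/\mu$-Lipschitz continuity.

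\textbf{Item 3.} Differentiability of $\psi$ on $Y$, together with convexity, gives continuity of $\nabla\psi$ on the open set $Y$ (Rockafellar Thm.~25.5). Strong convexity makes $\nabla\psi$ injective, since strong monotonicity forces $y_1=y_2$ whenever the gradients coincide. For surjectivity, fix any $z$: the point $y(z)\in Y$ satisfies $z\in\partial\psi(y(z))=\{\nabla\psi(y(z))\}$. Combining these shows $\nabla\psi:Y\to\R^r$ is a bijection with inverse $\nabla\psi^*$.

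\textbf{Item 4.} This is the main obstacle, because it needs an inverse-function argument, and strong convexity alone does not make $\nabla^2\psi$ positive definite in general. The case $k=0$ is trivial. For $k\ge1$, suppose $\psi\in C^k(Y)$ with $k\ge2$, so $\nabla\psi\in C^{k-1}$. The goal is $\nabla\psi^*=(\nabla\psi)^{-1}\in C^{k-1}$, which gives $\psi^*\in C^k$.

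The key step is to verify that $\nabla^2\psi(y)\succeq\mu I$ on $Y$. This follows by differentiating the strong monotonicity inequality along directions: take $y_2=y_1+td$, divide by $t^2$, and let $t\to0$. Hence $\nabla^2\psi(y)$ is invertible everywhere on $Y$. The $C^{k-1}$ inverse function theorem, applied to the global bijection from item 3, then gives $\nabla\psi^*\in C^{k-1}(\R^r)$ with
\[
\nabla^2\psi^*(z)=[\nabla^2\psi(\nabla\psi^*(z))]^{-1}.
\]

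The remaining case $k=1$ is already covered by items 1 and 2, since $\nabla\psi^*$ is Lipschitz and hence continuous, so $\psi^*\in C^1$. The case $k=\infty$ follows by applying the argument for every finite $k$.
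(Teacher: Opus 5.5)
Your proof is correct and follows the same route as the paper. For items 1--3, the paper just cites Rockafellar's Theorems 25.5 and 26.5 and Rockafellar--Wets Proposition 12.60, and you unpack those standard Legendre-duality facts via coercivity, the inversion rule $\partial\psi^*=(\partial\psi)^{-1}$, and strong monotonicity; item 4 is obtained exactly as in the paper from $\nabla^2\psi\succeq\mu I$ on $Y$ and the inverse function theorem applied to the global bijection $\nabla\psi$ (lower semicontinuity of $\psi$, which you use implicitly, is inherited from $\phi$ in the paper's setting).
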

\begin{proof}{Proof.}
    Properties 1-3 follow from standard convex analysis (\citealt{rockafellar1970convex}[Theorem 25.5, Theorem 26.5], \citealt{rockafellar2009variational}[Proposition 12.60]). Note that $\nabla^2 \psi(y)$ is positive definite (and therefore non-singular) for all $y\in Y$ due to strong convexity of $\psi$. Then property 4 follows from property 3 and the inverse function theorem.\qed
\end{proof}
As a special case of property 4 in Lemma \ref{lem:basic}, a sufficient (but not necessary) condition for condition~3 in Definition \ref{def:Legendre} is $\psi\in C^2(Y)$, provided that the remaining conditions in Definition \ref{def:Legendre} are satisfied.


\subsection{Legendre-Regularized Policy Class}

Let $F\in\mathbb R^{n\times k}$ with $k\geq r$, and define $B:=N^\top F\in\mathbb R^{r\times k}$.
We assume that $\operatorname{rank}(B)=r$ (e.g., $F=I_n$ in which case $k=n$ and $B=N^\top$, or $F=N$ in which case $k=r$ and $B=I_r$).


Given an admissible Legendre regularizer $\phi$, define the regularized optimization layer
\begin{equation}\label{layer:opt}
    w_{F,\phi}(z)
\in
\arg\min_{w\in\mathbb R^n}
\left\{
\langle Fz,w\rangle+\phi(w)
\right\}.
\end{equation}
Since $\phi$ has its effective domain contained in $S$, this is equivalently a strongly convex optimization problem over the original feasible region $S$. A \textit{Legendre-regularized policy} (LRP) is then a composition
\[
\pi_g(x):=w_{F,\phi}(g(x)),
\qquad g\in\mathcal G,
\]
where $\mathcal G$ is a chosen class of prediction models, such as neural networks, that map side information to a latent parameter vector in $\R^k$. The induced policy class is
\[
\Pi_{F,\phi}(\mathcal{G})
:=
\{w_{F,\phi}\circ g:g\in\mathcal G\}.
\]
This policy class differs from the class of plug-in policies (i.e., $\{w^{\star}\circ g:g\in\mathcal G\}$) in an important way. For an LRP $\pi_g$, the model output $g(x)$ need not be interpreted as a prediction of the uncertain objective coefficients in a linear optimization problem over $S$. Instead, $g(x)$ parameterizes a regularized optimization problem whose solution is the decision. Thus, the downstream loss used for training can be any (sub)differentiable decision loss $\ell(w;\xi)$, not only a linear cost.

\subsection{Examples}\label{sec:examples}
We provide a few examples of admissible Legendre regularizers in the case when $S$ is a polytope (i.e., a bounded polyhedron) of the form $S=\{w:Aw=b,Cw\geq d\}$ with $C\in\R^{m\times n}$ and $Cw\geq d$ being facet-defining for $S$, in which case $S^\circ=\{w:Aw=b,Cw>d\}$.

\subsubsection{$\mathsf{LRP}$-Log.} A classic example is the logarithmic barrier
\begin{equation}\label{regularizer:log}
    \phi_{\log}(w)
=
-\tau\sum_{j=1}^m \log(C_jw-d_j)
+\mathbb I_{\{w:Aw=b\}}(w),
\end{equation}
where $\tau>0$, $C_j$ denotes the $j$-th row of $C$, the function is extended by $+\infty$ whenever
$C_jw-d_j\leq 0$ for some $j$, and $\mathbb I_{\{w:Aw=b\}}:\R^n\rightarrow\{0,+\infty\}$. Note that the indicator function can be easily encoded as constraints in the solution of \eqref{layer:opt}. 

\subsubsection{$\mathsf{LRP}$-Ent.} Another useful example is the entropic regularizer
\begin{equation}\label{regularizer:entropy}
    \phi_{\mathrm{ent}}(w)
    =
    \tau\sum_{j=1}^m
    (C_jw-d_j)\log(C_jw-d_j)
    +
    \mathbb I_{\{w:Aw=b\}}(w),
\end{equation}
where $\tau>0$, $0\log 0:=0$, and the function is extended by $+\infty$ whenever
$C_jw-d_j< 0$ for some $j$. Unlike the logarithmic barrier, this regularizer does not diverge
in value at the boundary. However, its gradient diverges as any slack approaches zero, and hence
the corresponding $\psi$ is essentially smooth.

\subsubsection{$\mathsf{LRP}$-Ptb.} Moreover, regularizers can be made implicit. We next show an example of an admissible Legendre regularizer that does not require an algebraic description of $S$, but a linear optimization oracle over $S$. Given a full-dimensional polyhedral $S$ and some well-behaved noise vector $Z$ (e.g., $Z\sim\mathcal N(0,\tau^2\cdot I_n)$), \cite{berthet2020learning} show that the following perturbed minimum
\[
w_{\mathrm{ptb}}(z):=\mathbb E_Z\left[w_{\mathrm{ptb}}^Z(z)\right]~\textrm{with }w_{\mathrm{ptb}}^Z(z)\in\arg\min_{w\in S}\langle z+ Z,w\rangle
\]
can be written as a regularized minimum
\[
w_{\mathrm{ptb}}(z)\in\arg\min_{w\in S}\left[\langle z,w\rangle+\phi_{\mathrm{ptb}}(w)\right],
\]
with $\phi_{\mathrm{ptb}}$ being the convex conjugate of $F(z)=\bE_Z[\max_{w\in S}\langle z-Z,w\rangle]$.
Furthermore, under the assumptions in \cite{berthet2020learning}, $\phi_{\mathrm{ptb}}$ is strongly convex and Legendre-type (implying essential smoothness) with its convex conjugate $\phi_{\mathrm{ptb}}^*=F$ being twice differentiable. 
Therefore, $w_{\mathrm{ptb}}=w_{I_n,\phi_{\mathrm{ptb}}}$ is an LRP with $\phi_{\mathrm{ptb}}$ being an admissible Legendre-regularizer implicitly defined as the convex conjugate of an expected parametric maximum. Nevertheless, stochastic gradient/Jacobian information can be evaluated by Monte-Carlo methods \citep{berthet2020learning}.

\subsubsection{Unbounded $S$.} In the case when the feasible region $S=\{w:Aw=b,Cw\geq d\}$ is unbounded, the logarithmic barrier \eqref{regularizer:log} or the entropic regularizer \eqref{regularizer:entropy} no longer implies a strongly convex $\psi$. But one may simply add to them a strongly convex real-valued differentiable function (e.g., $\|w\|_2^2$) so that the corresponding $\psi$ becomes strongly convex, and then the augmented $\phi$ becomes an admissible Legendre regularizer.



\subsection{Resulting Solution Map}

The following result summarizes the basic properties of the resulting solution map.

\begin{theorem}
	\label{thm:main}
    Suppose $\phi$ is an admissible Legendre regularizer for $S$. Then the following hold:
	
	\begin{enumerate}
		\item For every $z\in\mathbb R^k$, the optimization problem \eqref{layer:opt} defining $w_{F,\phi}(z)$ has a unique optimal solution, and the unique optimal solution $w_{F,\phi}(z)\in S^\circ$;
		
		\item The solution map admits the representation
		\[
		w_{F,\phi}(z)
		=
		\bar w
		+
		N\nabla\psi^*(-Bz).
		\]
		Consequently, $w_{F,\phi}$ is differentiable, with Jacobian
		\[
		Jw_{F,\phi}(z)
		=
		-N\nabla^2\psi^*(-Bz)B;
		\]
		
		\item The map $w_{F,\phi}$ is Lipschitz continuous. In particular, for $z_1,z_2\in\R^k$,
		\[
		\|w_{F,\phi}(z_1)-w_{F,\phi}(z_2)\|_2
		\le
		\frac{\|B\|_2}{\mu}
		\|z_1-z_2\|_2
		\le
		\frac{\|F\|_2}{\mu}
		\|z_1-z_2\|_2;
		\]
		
		\item The image of $w_{F,\phi}$ is $S^\circ$. If $k=r$, then $w_{F,\phi}$ is bijective from $\mathbb R^r$ onto $S^\circ$, and its inverse $w_{F,\phi}^{-1}$ is continuous and satisfies
		\[
		w_{F,\phi}^{-1}(\bar w+Ny)
		=
		-B^{-1}\nabla\psi(y).
		\]
		If, in addition, $\psi\in C^{k}(Y)$ for some integer $k\geq 2$, then $w_{F,\phi}\in C^{k-1}(\R^r)$ and $w^{-1}_{F,\phi}\in C^{k-1}(S^\circ)$.
	\end{enumerate}
\end{theorem}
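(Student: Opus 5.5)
The plan is to reduce everything to the reduced variable $y$ and then apply Lemma~\ref{lem:basic} to $\psi$. Since $\operatorname{dom}(\phi)\subseteq \operatorname{cl}(S^\circ)\subseteq\operatorname{aff}(S)$, any $w$ with finite objective in \eqref{layer:opt} can be written as $w=\bar w+Ny$. Substituting, the objective becomes
\[
\langle Fz,\bar w\rangle+\langle N^\top Fz,y\rangle+\psi(y)=\text{const}+\langle Bz,y\rangle+\psi(y),
\]
so \eqref{layer:opt} is equivalent to $\min_y\{\psi(y)-\langle -Bz,y\rangle\}$. Strong convexity of the proper lsc function $\psi$ gives coercivity, hence existence and uniqueness of a minimizer $y^\star$. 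Uniqueness of $w$ then follows because $y\mapsto\bar w+Ny$ is injective, $N$ having orthonormal columns.

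For membership in $S^\circ$, I will use essential smoothness. For a Legendre-type function the subdifferential is empty outside $Y=\operatorname{int}(\operatorname{dom}\psi)$ (Rockafellar, Thm.~26.1). Optimality gives $-Bz\in\partial\psi(y^\star)$, so $y^\star\in Y$ and $\nabla\psi(y^\star)=-Bz$. Property~3 of Lemma~\ref{lem:basic} then yields $y^\star=\nabla\psi^*(-Bz)$, which is the representation in item~2. I also need to check that $Y$ as defined (preimage of $S^\circ$) equals $\operatorname{int}(\operatorname{dom}\psi)$. This follows from $S^\circ=(\operatorname{dom}\phi)^\circ$ together with the fact that the affine map $y\mapsto\bar w+Ny$ is a homeomorphism onto $\operatorname{aff}(S)$. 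Since $y^\star\in Y$, we get $w_{F,\phi}(z)\in S^\circ$. The Jacobian follows from the chain rule, using condition~3 of Definition~\ref{def:Legendre}.

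For item~3, write
\[
\|w(z_1)-w(z_2)\|=\|N(\nabla\psi^*(-Bz_1)-\nabla\psi^*(-Bz_2))\|\le \tfrac1\mu\|B(z_1-z_2)\|.
\]
This uses that $N$ is an isometry and the $1/\mu$-Lipschitz property~2 of Lemma~\ref{lem:basic}. The second inequality is $\|B\|_2\le\|N^\top\|_2\|F\|_2=\|F\|_2$.

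For item~4, the image statement uses surjectivity of $B$, since $\operatorname{rank}B=r$, so $-Bz$ ranges over all of $\mathbb R^r$. By property~3, $\nabla\psi^*$ maps $\mathbb R^r$ onto $Y$, so the image is $\bar w+NY=S^\circ$. If $k=r$, then $B$ is invertible and $w_{F,\phi}$ is a composition of bijections. Inverting gives $-Bz=\nabla\psi(y)$, i.e.\ $z=-B^{-1}\nabla\psi(y)$, and this inverse is continuous because $\nabla\psi$ is continuous on $Y$. For higher smoothness, if $\psi\in C^k(Y)$, then property~4 gives $\psi^*\in C^k$, so $\nabla\psi^*\in C^{k-1}$ and $w_{F,\phi}\in C^{k-1}$. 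Likewise $\nabla\psi\in C^{k-1}(Y)$, which transfers to $S^\circ$ via the affine chart $y=N^\top(w-\bar w)$, giving $w^{-1}_{F,\phi}\in C^{k-1}(S^\circ)$.

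The main obstacle I expect is the domain bookkeeping in item~1. It requires showing that the reduced problem is exactly equivalent to the original one, including that $\operatorname{dom}\phi\subseteq\operatorname{aff}(S)$ (closure of the domain equals $\operatorname{cl}S$ by convexity), and then justifying that the minimizer cannot lie on the boundary of $\operatorname{dom}\psi$. For the latter I plan to cite Rockafellar's Theorem~26.1, or else argue directly: if $y^\star$ were on the boundary, moving toward an interior point would decrease the objective, because directional derivatives tend to $-\infty$ under essential smoothness.
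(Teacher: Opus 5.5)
Your proposal is correct and follows essentially the same route as the paper. Both reduce to $\min_y\{\langle Bz,y\rangle+\psi(y)\}$, use the first-order condition with property 3 of Lemma~\ref{lem:basic} to get the representation and the Jacobian, use property 2 for the Lipschitz bound, and use the full row rank of $B$ together with property 4 for item 4. Your extra bookkeeping fills in steps the paper only asserts: that $\operatorname{dom}\phi\subseteq\operatorname{aff}(S)$, that $Y=\operatorname{int}(\operatorname{dom}\psi)$, and the appeal to Rockafellar's Theorem 26.1 for interiority. Your forward computation of the image as $\bar w+N\nabla\psi^*(\mathbb R^r)$ is equivalent to the paper's argument of solving $Bz=-\nabla\psi(y)$ for each $y\in Y$.
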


\begin{proof}{Proof.}
	Using the reduced representation $w=\bar w+Ny$, the regularized optimization problem becomes $\min_{y}
	\left\{
	\langle Fz,\bar w+Ny\rangle+\psi(y)
	\right\}$,
	which is equivalent to
    \begin{equation}\label{reduced_problem}
        \min_y
	\left\{
	\langle Bz,y\rangle+\psi(y)
	\right\}.
    \end{equation}
	Since $\psi$ is strongly convex and essentially smooth, this problem has a unique minimizer which lies in $\operatorname{int}(\operatorname{dom}\psi)=Y$. Therefore $w_{F,\phi}(z)\in S^\circ$.
	
	The first-order condition of \eqref{reduced_problem} is $Bz+\nabla\psi(y)=0$, which is equivalent to $-Bz=\nabla\psi(y)$.
	By property 3 of Lemma \ref{lem:basic}, $
	y=\nabla\psi^*(-Bz)$.
	Therefore,
	\[
	w_{F,\phi}(z)
	=
	\bar w+N\nabla\psi^*(-Bz).
	\]
	Since $\psi^*$ is twice differentiable, differentiating the above expression gives
	\[
	Jw_{F,\phi}(z)
	=
	-N\nabla^2\psi^*(-Bz)B.
	\]
	
	Due to property 2 of Lemma \ref{lem:basic},
	\[
	\begin{aligned}
		\|w_{F,\phi}(z_1)-w_{F,\phi}(z_2)\|_2
		=
		\|N[
		\nabla\psi^*(-Bz_1)-\nabla\psi^*(-Bz_2)
		]\|_2
		\le
		\frac{\|N\|_2\cdot\|B\|_2}{\mu}\|z_1-z_2\|_2.
	\end{aligned}
	\]
	Since $N$ has orthonormal columns, $\|N\|_2=1$ and $\|B\|_2=\|N^\top F\|_2\le \|F\|_2$.
	
	Finally, let $w=\bar w+Ny$ be an arbitrary point in $S^\circ$. Since $B$ has full row rank, there exists $z\in\mathbb R^k$ such that $Bz=-\nabla\psi(y)$.
	Then $y$ satisfies the first-order condition for the reduced problem \eqref{reduced_problem}, so $w=w_{F,\phi}(z)$. Thus the image of $w_{F,\phi}$ is $S^\circ$. If $k=r$ and $B$ is non-singular, this $z$ is unique and is given by $z=-B^{-1}\nabla\psi(y)$.
	The stated continuity and differentiability conclusions follow from the corresponding smoothness of $\nabla\psi$ and property 4 of Lemma \ref{lem:basic}.\qed
\end{proof}

Furthermore, one can show that the assumptions we made for an admissible Legendre regularizer $\phi$ (assumptions 1-3 in Definition \ref{def:Legendre}) are necessary for some properties in Theorem \ref{thm:main} to hold. For simplicity, we assume that $Y=S^\circ$ is bounded and full-dimensional, $n=r$, and $N=F=I_n$.
\begin{proposition}\label{prop:necessary_conditions}
    Let $Y\subseteq\R^n$ be a nonempty bounded open convex set, and $\psi:\R^r\rightarrow\R\cup\{+\infty\}$ be a proper lower semicontinuous convex function with $Y=\operatorname{int}(\operatorname{dom}(\psi))$. For $z\in\R^n$, define the solution map\begin{align}
        \bar w_{\psi}(z)\in\arg\min_{w\in\R^n}\{\langle z,w\rangle+\psi(w)\}.\label{define_wbar}
    \end{align}
    Assume the following properties of $\bar w_{\psi}$:\begin{enumerate}[label=(\roman*)]
        \item For all $z\in\R^n$, $\bar w_\psi(z)$ is uniquely defined, i.e., the right-hand side of \eqref{define_wbar} always has a unique minimizer;
        \item The map $\bar w_{\psi}:\R^n\rightarrow Y$ is bijective and $L$-Lipschitz continuous for some $L>0$;
        \item The map $\bar w_{\psi}$ is differentiable everywhere.
    \end{enumerate}
    Then function $\psi$ is $\mu$-strongly convex with $\mu=1/L$, essentially smooth, and $\psi^*$ is twice differentiable on $\R^n$.
\end{proposition}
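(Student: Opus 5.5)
The plan is to identify $\bar w_\psi$ with $\nabla\psi^*(-\cdot)$ through the subdifferential optimality condition, and then read off each of the three conclusions from properties (i)--(iii) by convex duality. Since $\psi$ is proper, lower semicontinuous and convex, we have $\psi^{**}=\psi$. By Fermat's rule, $w$ minimizes $\langle z,w\rangle+\psi(w)$ if and only if $-z\in\partial\psi(w)$, which holds if and only if $w\in\partial\psi^*(-z)$ (\citealt{rockafellar1970convex}[Theorem 23.5]). Property (i) therefore says that $\partial\psi^*(u)$ is a singleton for every $u\in\R^n$. In particular $\psi^*$ is finite on all of $\R^n$, and it is differentiable there with
\[
\nabla\psi^*(u)=\bar w_\psi(-u)
\]
(\citealt{rockafellar1970convex}[Theorem 25.1]).

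\textbf{Twice differentiability and strong convexity.} The first is immediate: by (iii), $\nabla\psi^*=\bar w_\psi(-\cdot)$ is differentiable everywhere, so $\psi^*$ is twice differentiable on $\R^n$. For the second, (ii) shows that $\nabla\psi^*$ is $L$-Lipschitz on $\R^n$. I will invoke the standard duality between smoothness and strong convexity (\citealt{rockafellar2009variational}[Proposition 12.60]): a proper lsc convex function has an $L$-Lipschitz gradient everywhere if and only if its conjugate is $1/L$-strongly convex. Applying this to $\psi^*$ and using $\psi^{**}=\psi$ gives that $\psi$ is $\mu$-strongly convex with $\mu=1/L$. This is the step where the strong convexity constant must be matched exactly, so I will make sure the cited equivalence is stated in the precise form needed. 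If the citation is awkward, a direct alternative is to check the co-coercivity inequality $\langle \nabla\psi^*(u)-\nabla\psi^*(v),u-v\rangle\ge \frac1L\|\nabla\psi^*(u)-\nabla\psi^*(v)\|^2$ (Baillon--Haddad) and translate it back through $y=\nabla\psi^*(u)$, $u\in\partial\psi(y)$.

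\textbf{Differentiability of $\psi$ on $Y$.} Fix $y\in Y$. By surjectivity in (ii), $y=\bar w_\psi(z)$ for some $z$, so $-z\in\partial\psi(y)\neq\emptyset$. Now suppose $-z_1,-z_2\in\partial\psi(y)$. Then $y$ minimizes both problems, so $\bar w_\psi(z_1)=\bar w_\psi(z_2)=y$, and injectivity forces $z_1=z_2$. Hence $\partial\psi(y)$ is a singleton, and $\psi$ is differentiable on $Y$ with $\nabla\psi(y)=-\bar w_\psi^{-1}(y)$.

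\textbf{Gradient blow-up at the boundary.} This is the main step I expect to need care. I argue by contradiction. Let $y^i\in Y$ converge to a boundary point $\hat y$ of $Y$, and suppose $\|\nabla\psi(y^i)\|$ does not diverge. Passing to a subsequence, $z^i:=-\nabla\psi(y^i)$ stays bounded and converges to some $z$. Since $y^i=\bar w_\psi(z^i)$ and $\bar w_\psi$ is continuous (being Lipschitz), we get $y^i\to\bar w_\psi(z)\in Y$. Thus $\hat y\in Y$. But $Y$ is open, so $\hat y$ cannot be a boundary point of $Y$, a contradiction. Hence $\|\nabla\psi(y^i)\|\to\infty$, and $\psi$ is essentially smooth.

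Boundedness of $Y$ is not actually needed in this argument; it is consistent with the hypotheses, since strong convexity together with finiteness of $\psi^*$ is compatible with a bounded domain.
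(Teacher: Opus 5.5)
Your proof is correct. Its skeleton matches the paper's: you identify $\bar w_\psi(-\cdot)$ with $\nabla\psi^*$, get twice differentiability of $\psi^*$ from (iii), get $1/L$-strong convexity from the global Lipschitz gradient via Rockafellar--Wets Proposition 12.60, and get differentiability of $\psi$ on $Y$ from uniqueness plus injectivity of $\bar w_\psi$, as the paper does.

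You depart from the paper in two steps.

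First, to show that $\psi^*$ is differentiable with $\nabla\psi^*(-z)=\bar w_\psi(z)$, the paper treats $-\psi^*(-z)$ as an optimal value and invokes an extended-real-valued Danskin theorem; that is where compactness of $\operatorname{cl}(Y)$ is used. You instead use the inversion $\partial\psi^*=(\partial\psi)^{-1}$ (Rockafellar, Theorem 23.5), so that (i) says directly that $\partial\psi^*(u)$ is a singleton for every $u$, and then apply Theorem 25.1. This route is cleaner, and, as you observe, it shows that boundedness of $Y$ is superfluous. For example, $\psi=\frac12\|\cdot\|_2^2$ with $Y=\mathbb{R}^n$ satisfies every other hypothesis and the conclusion.

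Second, for essential smoothness the paper also shows that $\partial\psi(y)=\emptyset$ for $y\notin Y$. This makes $\partial\psi=-\bar w_\psi^{-1}$ a one-to-one map from $Y$ onto $\mathbb{R}^n$, and the paper then cites Rockafellar's Corollary 26.3.1 (a one-to-one subdifferential forces Legendre type). You verify the boundary blow-up condition in the definition directly, using only that $\bar w_\psi$ is continuous with range in the open set $Y$.

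Your argument is more elementary and self-contained. The paper's version places the result inside the Legendre-type theory and records as a by-product that $\operatorname{dom}\partial\psi=Y$, at the cost of relying on a heavier citation.
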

\begin{proof}{Proof.}
    We first show that $\psi$ is differentiable on $Y$ and $\partial\psi(y)=\emptyset$ for $y\notin Y$. Let $y\in \R^n$ with $\partial\psi(y)\neq\emptyset$, $u,v\in\partial\psi(y)$ be two arbitrary subgradients of $\psi$ at $y$. Consider problem \eqref{define_wbar} with $z=-u$ and $z=-v$. Then, by the first-order condition $0\in z+\partial\psi(w)$ of \eqref{define_wbar}, $y$ optimizes both problems \eqref{define_wbar} with $z=-u$ and $z=-v$, respectively, i.e., $y=\bar w_{\psi}(-u)=\bar w_{\psi}(-v)$. By assumption (ii), $y\in Y$ and $u$ must be equal to $v$, i.e., $\partial\psi(y)$ must be a singleton. Since for arbitrarily chosen $y\in Y$, such arguments hold, by convexity of $\psi$, $\psi$ is differentiable on $Y$. Furthermore, by the first-order condition of \eqref{define_wbar},
    \begin{align}
        z+\nabla\psi(\bar w_{\psi}(z))=0,\quad z\in\R^n.\label{psi_diff}
    \end{align}

    Note that the optimal objective value of the right-hand side of \eqref{define_wbar} is $-\psi^*(-z)$ and $\operatorname{cl}(Y)$ is compact. By assumptions (i) and (ii), and the extended-real-valued extension of Danskin's theorem (\citealt{bertsekas1971control}[Proposition A.22]), 
    $\psi^*$ is differentiable with $\nabla\psi^*(-z)=\bar w_\psi(z)$ for all $z\in\R^n$.
    Moreover, due to assumption (iii), $\nabla\psi^*$ is differentiable everywhere, i.e., $\psi^*$ is twice differentiable on $\R^n$. Since $\bar w_{\psi}(-\cdot)=\nabla\psi^*(\cdot)$,
    strong convexity of $\psi(=\psi^{**})$ follows from assumption (ii) and \cite{rockafellar2009variational}[Proposition 12.60]. Moreover, by assumption (ii) and \eqref{psi_diff}, $\partial\psi=-\bar w_\psi^{-1}:Y\rightarrow\R^n$ is bijective. Following \cite{rockafellar1970convex}[Corollary 26.3.1], $\psi$ is essentially smooth.\qed
\end{proof}

Proposition~\ref{prop:necessary_conditions} demonstrates why the Legendre-type assumptions are necessary. If one wants a regularized optimizer that is uniquely defined for every finite parameter, parameterizes the relative interior of the feasible region through a Lipschitz bijection, and is differentiable everywhere, then the reduced regularizer must be strongly convex, essentially smooth, and have a twice differentiable convex conjugate. This separates our construction from other non-Legendre regularized approaches, for instance, the quadratic smoothing approach for decision-focused combinatorial optimization~\citep{wilder2019melding}, where a small squared-norm penalty is added to a linear optimization problem to obtain a quadratic program. 
Such a quadratic term, together with the hard constraints, does not lead to an admissible Legendre regularizer.
Specifically, when $S$ is full-dimensional and bounded, with
\[
    \phi(w)=\frac{\tau}{2}\|w\|^2_2+\mathbb I_S(w),
    \qquad \tau>0,
\]
the gradient of $\phi$ remains bounded as $w$ approaches the boundary of $S$. Hence $\phi$ is not essentially smooth. In fact, the optimizer is the Euclidean projection of $-z/\tau$ onto $S$, 
so different values of $z$, along directions in the normal cone, may correspond to the same solution lying on the boundary of $S$.
Thus, this map is not a bijection from the latent space onto $S^\circ$. Moreover, for polyhedral $S$, this projection map is only piecewise affine: it is smooth inside a fixed active set, but its Jacobian is piecewise constant and changes discontinuously when the active set changes.


\begin{figure}[tp!]
    \centering
    \includegraphics[width=\linewidth]{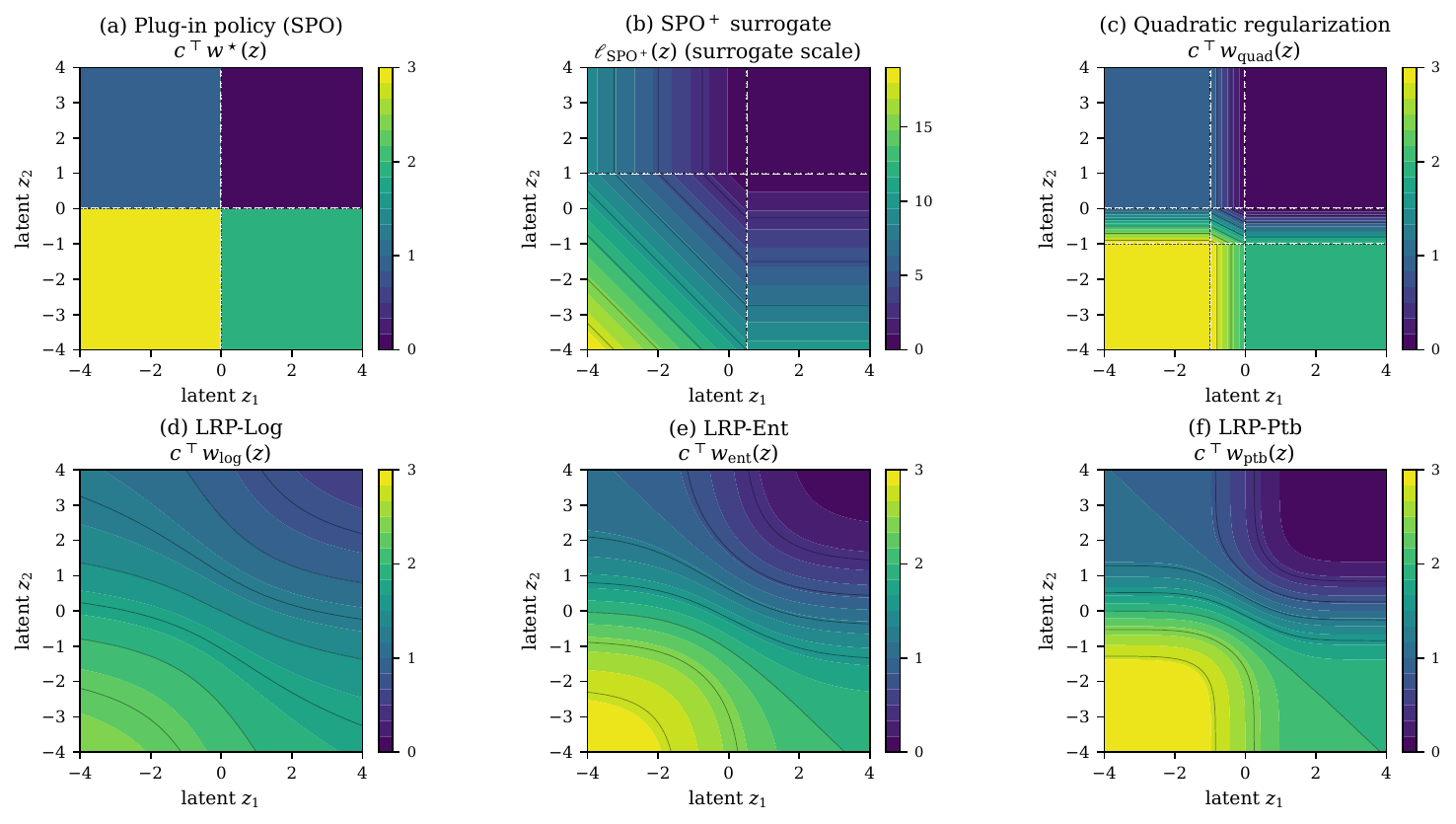}
    \caption{Loss landscapes over $S=[0,1]^2$ with true cost $c=(1,2)$. Panels (a), (c), (d), (e), and (f) show the downstream loss $c^\top w(z)$ induced by the plug-in policy, quadratic regularization, $\mathsf{LRP}$-Log, $\mathsf{LRP}$-Ent, and $\mathsf{LRP}$-Ptb, respectively. Panel (b) shows the SPO${}^+$ surrogate value. Dashed lines mark discontinuities, kink locations, or active-set changes; the three LRP panels in the bottom row have no such finite latent boundary and produce smooth interior policies.}
    \label{fig:toy_loss_landscape}
\end{figure}

Figure~\ref{fig:toy_loss_landscape} illustrates this distinction on a two-dimensional example. 
Let $S=[0,1]^2$ and let the true linear downstream cost be $c^\top w$ with $c=(1,2)$. For a latent vector $z\in\mathbb R^2$, the plug-in policy induces the discontinuous surface $c^\top w^\star(z)$, while the SPO${}^+$ surrogate is continuous but piecewise linear~\citep{elmachtoub2022smart}. Quadratic regularization smoothens the optimizer inside active regions, but the induced downstream loss still has kinks when the solution changes the active set. Specifically, the baselines are given by\begin{align*}
    w^\star(z)&:=\big(\mathds{1}_{(-\infty,0]}(z_i)\big)_{i=1}^2\in\arg\min_{w\in[0,1]^2}\langle z,w\rangle;\\
    \ell_{\textrm{SPO}+}(z)&:=\sum_{i=1}^2\max\{c_i-2z_i,0\}=\max_{w\in[0,1]^2}\langle c-2z,w \rangle;\\
    w_{\mathrm{quad}}(z)&:=\big(\max\big\{\min\{-z_i,1\},0\big\} \big)_{i=1}^2\in\argmin_{w\in[0,1]^2}\Bigl\{\langle z,w\rangle+ \frac{1}{2}\|w\|_2^2 \Bigr\}.
\end{align*} In contrast, the three LRP variants introduced in Section \ref{sec:examples}, $\mathsf{LRP}$-Log, $\mathsf{LRP}$-Ent, and $\mathsf{LRP}$-Ptb, produce smooth interior maps onto this box. Under a unit regularization ($\tau=1$) and a standard Gaussian perturbation, these maps are given by
\[
\begin{aligned}
w_{\log}(z)
&:= \left(\frac{2}{z_i+2+\sqrt{z_i^2+4}}\right)_{i=1}^2
\in \arg\min_{w\in(0,1)^2}\Bigl\{\langle z,w\rangle - \sum_{i=1}^2\log w_i - \sum_{i=1}^2\log(1-w_i) \Bigr\};
\\
w_{\mathrm{ent}}(z)
&:= \left(\frac{1}{1+\exp(z_i)}\right)_{i=1}^2
\in \arg\min_{w\in[0,1]^2} \Bigl\{ \langle z,w\rangle + \sum_{i=1}^2 \left[ w_i\log w_i + (1-w_i)\log(1-w_i) \right] \Bigr\};
\\
w_{\mathrm{ptb}}(z)
&:= \big( \mathbb P \left( z_i+Z_i\leq 0 \right) \big)_{i=1}^2
= \big( \Phi(-z_i) \big)_{i=1}^2
= \mathbb E_{Z} \left[ w_{\mathrm{ptb}}^Z(z) \right],\\
&\qquad\qquad\text{where }
\,
w_{\mathrm{ptb}}^Z(z)=\big(\mathds{1}_{(-\infty,0]}(z_i+Z_i)\big)_{i=1}^2 \in \arg\min_{w\in[0,1]^2} \langle z+Z,w\rangle,
\,
Z\sim\mathcal N(0,I_2).
\end{aligned}
\]
Here $0\log 0:=0$.
Clearly from Figure~\ref{fig:toy_loss_landscape}, LRP-induced downstream loss landscapes $c^\top w_{\log}(z)$, $c^\top w_{\mathrm{ent}}(z)$, and $c^\top w_{\mathrm{ptb}}(z)$ are smoother, and have no finite active-set boundaries in the latent space.

\subsection{Expressive Power of LRPs}
Finally, we show that the Legendre-regularized policy class $\Pi_{F,\phi}(\mathcal{G})$ preserves the universal approximation property of the original model class $\mathcal{G}$, demonstrating the strong expressive power of LRPs.

\begin{assumption}[Universal approximation]
    \label{aspt:univerval_approx}
	Let $\mathcal X\subseteq\mathbb R^{k_1}$ be compact. For every continuous function $h:\mathcal X\to\mathbb R^k$
	and every $\epsilon>0$, there exists $g\in\mathcal G$ such that
	\[
	\sup_{x\in\mathcal X}\|g(x)-h(x)\|_2\le \epsilon.
	\]
\end{assumption}

\begin{proposition}
	Suppose $\phi$ is an admissible Legendre regularizer for $S$, and Assumption \ref{aspt:univerval_approx} holds for some compact $\mathcal X$ and the model class $\mathcal G$. Let $\pi:\mathcal X\to S$
	be any continuous feasible policy. Then for every $\epsilon>0$, there exists an LRP $\hat \pi\in \Pi_{F,\phi}(\mathcal{G})$
	such that
	\[
	\sup_{x\in\mathcal X}
	\|\hat\pi(x)-\pi(x)\|_2
	\le \epsilon.
	\]
\end{proposition}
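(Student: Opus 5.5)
The plan is to move the target policy $\pi$ slightly into the relative interior, pull it back to latent space with the inverse of the solution map, and then approximate the resulting latent function using Assumption~\ref{aspt:univerval_approx}. The Lipschitz bound in property 3 of Theorem~\ref{thm:main} will transfer the latent approximation error back to decision space.

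\textbf{Step 1: shrink into the relative interior.} Because $\pi$ may take values on the boundary of $S$, we first fix $\bar w\in S^\circ$ and, for $\lambda\in(0,1)$, define
\[
\pi_\lambda(x):=(1-\lambda)\pi(x)+\lambda\bar w .
\]
Since $\pi(x)\in S$ and $\bar w\in S^\circ$, a standard fact of convex analysis (Rockafellar, Theorem 6.1) gives $\pi_\lambda(x)\in S^\circ$. The map $\pi_\lambda$ is continuous, and
\[
\sup_x\|\pi_\lambda(x)-\pi(x)\|_2=\lambda\sup_x\|\pi(x)-\bar w\|_2 .
\]
This supremum is finite because $\pi$ is continuous on the compact set $\mathcal X$. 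Choosing $\lambda$ small makes this error at most $\epsilon/2$.

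\textbf{Step 2: construct a continuous latent preimage.} Write $\pi_\lambda(x)=\bar w+Ny(x)$ with $y(x):=N^\top(\pi_\lambda(x)-\bar w)\in Y$; this is continuous. Define
\[
h(x):=-B^\top(BB^\top)^{-1}\nabla\psi(y(x)) .
\]
This is well defined because $\operatorname{rank}(B)=r$ makes $BB^\top$ invertible, and it satisfies $Bh(x)=-\nabla\psi(y(x))$. By property 3 of Lemma~\ref{lem:basic}, $\nabla\psi$ is continuous on $Y$, so $h:\mathcal X\to\R^k$ is continuous. The first-order argument in the proof of Theorem~\ref{thm:main} then gives $w_{F,\phi}(h(x))=\bar w+N\nabla\psi^*(\nabla\psi(y(x)))=\pi_\lambda(x)$.

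\textbf{Step 3: approximate and transfer the error.} Assumption~\ref{aspt:univerval_approx} gives $g\in\mathcal G$ with $\sup_x\|g(x)-h(x)\|_2\le \mu\epsilon/(2\|B\|_2)$; if $B=0$ there is nothing to do, but $B$ has rank $r\ge1$. Property 3 of Theorem~\ref{thm:main} then yields
\[
\sup_x\|w_{F,\phi}(g(x))-\pi_\lambda(x)\|_2\le\epsilon/2 .
\]
Set $\hat\pi:=w_{F,\phi}\circ g$. The triangle inequality with Step 1 gives $\sup_x\|\hat\pi(x)-\pi(x)\|_2\le\epsilon$.

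The main obstacle is Step 2: $h$ must be continuous even though $w_{F,\phi}$ need not be invertible when $k>r$. The explicit right inverse $B^\top(BB^\top)^{-1}$ resolves this. The interior shrinking in Step 1 is essential, because $\nabla\psi$ blows up at the boundary by essential smoothness. Without it, $h$ would be unbounded near points where $\pi(x)\in\partial S$, and no uniform latent approximation would exist.
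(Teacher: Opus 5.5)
Your proposal is correct and matches the paper's proof essentially step for step. It uses the same shrinkage $(1-\lambda)\pi+\lambda\bar w$ into $S^\circ$ and the same continuous latent preimage $h(x)=-B^\top(BB^\top)^{-1}\nabla\psi(y(x))$ with $y(x)=N^\top(\pi_\lambda(x)-\bar w)$, and it transfers the $\mu\epsilon/(2\|B\|_2)$ latent error through the Lipschitz bound of Theorem~\ref{thm:main} in the same way. The identity $\bar w+Ny(x)=\pi_\lambda(x)$ holds because $\pi_\lambda(x)-\bar w\in\ker(A)$ and $NN^\top$ is the orthogonal projector onto $\ker(A)$; you use this implicitly, as the paper does.
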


\begin{proof}{Proof.}
    {Fix $\bar w\in S^\circ$. Define
	\[
	M:=\sup_{x\in\mathcal X}\|\pi(x)-\bar w\|_2.
	\]
	Since $\mathcal X$ is compact and $\pi$ is continuous, $M<+\infty$. Choose $\delta\in(0,1)$ sufficiently small such that $\delta M\leq\epsilon/2$, and define
	\[
	\pi'(x):=(1-\delta)\pi(x)+\delta\bar w.
	\]
	Because $S$ is convex, $\pi(x)\in S$, and $\bar w\in S^\circ$, we have $\pi'(x)\in S^\circ$
	for all $x\in\mathcal X$. Moreover, $\pi':\mathcal X\rightarrow S^\circ$ is continuous and
	\[
	\sup_{x\in\mathcal X}
	\|\pi'(x)-\pi(x)\|_2
	=
	\delta\sup_{x\in\mathcal X}\|\bar w-\pi(x)\|_2
	\leq\epsilon/2.
	\]}
	Define $y(x):=N^\top(\pi'(x)-\bar w)$.
	{Since the columns of $N$ form an orthonormal basis of $\ker(A)$, we have $\bar w+Ny(x)=\pi'(x)\in S^\circ$ and hence $y(x)\in Y$.} Since $\pi'$ is continuous and $\nabla\psi$ is continuous on $Y$, the map
	\[
	h(x)
	:=
	-B^\top(BB^\top)^{-1}\nabla\psi(y(x))
	\]
	is continuous. Moreover, $Bh(x)=-\nabla\psi(y(x))$.
	Therefore, by Theorem \ref{thm:main} and property 3 in Lemma \ref{lem:basic}, we have
	\[
	w_{F,\phi}(h(x))=\bar w+N\nabla\psi^*(-Bh(x))=\bar w+Ny(x)=\pi'(x).
	\]
	
	By the universal approximation assumption, choose $g\in\mathcal G$ such that
	\[
	\sup_{x\in\mathcal X}
	\|g(x)-h(x)\|_2
	\le
	\frac{\mu\epsilon}{2\|B\|_2}.
	\]
	Define $\hat\pi(x):=w_{F,\phi}(g(x))\in \Pi_{F,\phi}(\mathcal{G})$.
	Following the Lipschitz continuity of $w_{F,\phi}$ from Theorem \ref{thm:main},
	\[
	\begin{aligned}
		\sup_{x\in\mathcal X}
		\|\hat\pi(x)-\pi(x)\|_2
		&\le
		\sup_{x\in\mathcal X}
		\|w_{F,\phi}(g(x))-w_{F,\phi}(h(x))\|_2
		+
		\sup_{x\in\mathcal X}
		\|\pi'(x)-\pi(x)\|_2  \\
		&\le
		\frac{\|B\|_2}{\mu}
		\sup_{x\in\mathcal X}
		\|g(x)-h(x)\|_2
		+
		\epsilon/2  \\
		&\le
		\epsilon.
	\end{aligned}
	\]\qed
\end{proof}

\section{Numerical Experiments}
\label{sec:numerical_experiments}
We evaluate the LRP framework on three contextual stochastic optimization problems: a single-product newsvendor problem, a two-stage resource allocation problem with box constraints, and a two-stage resource allocation problem with group-budget constraints. 
The newsvendor design is adapted from the numerical experiments of~\citet{zhang2026data}, and the Piecewise Affine Decision Rule (PADR) benchmark is based on the decision-rule method proposed therein. The resource allocation design and the residual-based Sample Average Approximation (SAA) benchmarks are adapted from the Data-Driven SAA (DD-SAA) framework of \citet{kannan2025data}. We compare these benchmarks with three LRP variants defined in Section \ref{sec:examples}: $\mathsf{LRP}$-Log, $\mathsf{LRP}$-Ent, and $\mathsf{LRP}$-Ptb.

\subsection{Experimental Protocol}
\label{subsec:experiment_protocol}

\paragraph{Data splitting and performance evaluation.}
For each problem setting, consistent with recent machine-learning evaluations that select configurations using held-out validation data and assess the selected configuration on a separate test set over repeated runs~\citep{hazimeh2023mind,marx2023calibration}, we first generate five independent tuning replications for hyperparameter selection and, after selection, 50 fresh independent confirmation replications for final reporting. Each replication consists of an i.i.d.\ data set $\mathcal D=\{(x_i,\xi_i)\}_{i=1}^{|\mathcal D|}$ partitioned in proportions $0.5/0.25/0.25$. In every replication, the training subset $\mathcal D_{\mathrm{tr}}$ is used to estimate model parameters, and the validation subset $\mathcal D_{\mathrm{val}}$ is used for method-specific within-training choices, such as early stopping and checkpoint selection, when applicable. In a tuning replication, the third subset is denoted by $\mathcal D_{\mathrm{tune}}$ and is used to score candidate configurations; in a confirmation replication, it is denoted by $\mathcal D_{\mathrm{test}}$ and is reserved for final performance assessment.

Since not all tested methods guarantee feasibility, to ensure the final solution is feasible, for every method $a$, let $\widetilde\pi_a(x)$ denote its raw decision and let $\widehat\pi_a(x)$ denote the restored decision used for evaluation. 
Specifically, with $S_{\mathrm{ra}}$ denoting the corresponding first-stage feasible set,
\[
\widehat\pi_a(x)
:=
\operatorname*{argmin}_{w\in S_{\mathrm{ra}}}
\frac{1}{2}\left\|w-\widetilde\pi_a(x)\right\|_2^2,
\]
and both tuning-set and test-set costs are evaluated using $\widehat\pi_a(x)$.


\paragraph{Hyperparameter tuning.}
For each problem setting, all methods use the same outer tuning protocol. Each candidate configuration is evaluated over the five tuning replications, and the candidate with the lowest mean tuning-set cost is selected. The corresponding search domains and finite candidate sets are reported in Appendix~\ref{subsec:appendix_tuning}.

For $\mathsf{LRP}$-Log, $\mathsf{LRP}$-Ent, $\mathsf{LRP}$-Ptb, and PADR, hyperparameter tuning uses the same sequential Gaussian-process Bayesian-optimization procedure with expected improvement and a common prespecified total budget of candidate evaluations within each problem setting~\citep{jones1998efficient,snoek2012practical}. For the residual-based SAA benchmarks, the active tuning choices form small finite sets and are exhaustively enumerated using the same tuning score. After selection, each configuration is held fixed throughout the 50 confirmation replications.

\paragraph{LRP variants.}
The three tested LRP methods use the same prediction architecture for learning the latent model $g_\theta:\mathcal X\to\mathbb R^k$ and differ only in the regularizer used in the optimization layer. Specifically, $g_\theta$ is a multilayer perceptron with three fully connected hidden layers of widths 16, 32, and 16, a ReLU activation after each hidden layer, and a linear output layer of width $k$. All networks are trained with Adam using mini-batches of size $50$ and validation patience $100$, for at most $10{,}000$ epochs in the newsvendor experiment and $2{,}000$ epochs in the resource allocation experiments.
For $\phi\in\{\phi_{\log},\phi_{\mathrm{ent}},\phi_{\mathrm{ptb}}\}$, the corresponding LRP is
\[
    \pi_{\theta,\phi}(x)
    :=
    w_{F,\phi}\bigl(g_\theta(x)\bigr).
\]
The parameter $\tau$ retains the meaning introduced in Section~\ref{sec:examples}: it scales the explicit regularizers $\phi_{\log}$ and $\phi_{\mathrm{ent}}$, while for $\phi_{\mathrm{ptb}}$ it is the standard deviation of the Gaussian perturbation $Z$. During training, the smoothing parameter at epoch $e$ is $\tau_e=\max\{\tau_{\min},\tau_0\delta^{\lfloor e/s\rfloor}\}$, where $\tau_0$, $\tau_{\min}$, the decay factor $\delta$, and the update interval $s$ are tuned jointly with the remaining method-specific hyperparameters; the value associated with the validation-selected checkpoint is used for evaluation. The predictor is trained by minimizing empirical decision regret:
\[
    \min_\theta
    \frac{1}{|\mathcal D_{\mathrm{tr}}|}
    \sum_{(x_i,\xi_i)\in\mathcal D_{\mathrm{tr}}}
    \left[
    \ell\left(\pi_{\theta,\phi}(x_i);\xi_i\right)
    -
    \min_{w\in S}\ell(w;\xi_i)
    \right].
\]
In both the newsvendor and resource allocation experiments, the outer loss is convex and piecewise linear in the decision. At non-differentiable points of the loss function, we select a valid Clarke subgradient with respect to the decision and backpropagate it through the Jacobian of the regularized optimization map.

\paragraph{Benchmark methods.}
We compare against recent representative baselines rather than reimplementing the full set of earlier contextual optimization methods. On the direct policy-learning side, we use PADR~\citep{zhang2026data}, a recent state-of-the-art decision-rule method that has been shown to outperform several prominent decision-rule approaches, including neural-network and RKHS-based decision rules. PADR parameterizes each decision coordinate as the difference of two max-affine functions and fits the resulting decision rule by empirical risk minimization.
On the predict-then-optimize side, we consider the residual-based SAA family of~\citet{kannan2025data}: Empirical Residual SAA (ER-SAA) constructs scenarios using residuals from the model fitted on the training samples, whereas Jackknife SAA (J-SAA) uses leave-one-out residuals. Jackknife-plus SAA (J${}^+$-SAA) further pairs each leave-one-out residual with its corresponding leave-one-out prediction, following the extended preprint~\citep{kannan2022data_extended}. Further implementation details for these three methods are presented in Appendix~\ref{subsec:appendix_benchmark}.


We do not include the standard SPO${}^+$ surrogate~\citep{elmachtoub2022smart} as a separate benchmark because it is designed for problems in which the predicted uncertainty enters as coefficients of a linear downstream objective. In our experiments, demand instead enters through inventory imbalance or recourse constraints, producing general convex, piecewise-linear losses.

\subsection{Single-Product Newsvendor Problem}
\label{subsec:experiment_newsvendor}

The first experiment considers a single-product newsvendor problem with contextual demand. Given context $x$, the decision $w$ is an order quantity and the uncertain parameter $\xi$ is the realized demand. The downstream loss is
\[
    \ell_{\mathrm{nv}}(w;\xi)
    =
    c_b(\xi-w)_+
    +
    c_h(w-\xi)_+,
    \qquad
    (c_b,c_h)=(8,2).
\]
Although the classical newsvendor formulation without side constraints imposes only $w\geq 0$, the LRP layers are implemented on $S_{\mathrm{nv}}=[0,w^{\max}]$ with $w^{\max}=100$ so that each regularized optimization map is well-defined for every latent input. For all methods, any negative raw order quantity is restored to zero before the unconstrained newsvendor loss is evaluated.

We use the sparse max-affine newsvendor design of~\citet{zhang2026data} and modify it by applying a sign-preserving power transformation to its two informative context coordinates to introduce more nonlinearity. For each observation, $x=(x_1,\ldots,x_{k_1})$ has independent coordinates $x_j\sim\operatorname{Unif}[-1,1]$. The conditional demand depends only on the first two coordinates, while the remaining $k_1-2$ coordinates are irrelevant noise features. For $\gamma>0$, define
\[
    \operatorname{spow}_{\gamma}(t)
    :=
    \operatorname{sign}(t)|t|^\gamma,
    \qquad
    \widetilde x_j
    :=
    \operatorname{spow}_{\gamma}(x_j),
    \quad j=1,2.
\]
Conditional demand is generated as
\[
    \xi
    =
    f_{\mathrm{nv},\gamma}(x)
    +
    \sigma\zeta,
    \qquad
    \zeta\sim\mathcal N(0,1),
    \qquad
    \zeta\perp x,
\]
where
\[
    f_{\mathrm{nv},\gamma}(x)
    =
    10
    +
    \max\left\{
    5\widetilde x_1-10\widetilde x_2,
    -10\widetilde x_1+5\widetilde x_2,
    15\widetilde x_1
    \right\}.
\]
We vary $\gamma\in\{1,3,5\}$ and $\sigma\in\{1,3\}$ to control the nonlinearity and noise standard deviation, respectively. Note that under the data-generating process, $f_{\mathrm{nv},\gamma}(x)\leq25$, and the largest noise standard deviation is $3$. Since $c_b/(c_b+c_h)=0.8$, the conditional population-optimal order quantity is at most $25+3\Phi^{-1}(0.8)\approx27.53$. Thus, the maximum order quantity $w^{\max}=100$ is a deliberately loose upper bound that provides a compact domain for the LRP maps without altering the population-optimal newsvendor policy. Under this box constraint, all three LRP maps admit closed-form expressions and can therefore be evaluated efficiently; their explicit formulas are provided in Appendix~\ref{subsec:appendix_cf}.

For each confirmation replication, the newsvendor test cost is
\[
    \widehat{\operatorname{Cost}}_{\mathrm{nv}}(a)
    =
    \frac{1}{|\mathcal D_{\mathrm{test}}|}
    \sum_{(x_i,\xi_i)\in\mathcal D_{\mathrm{test}}}
    \left[
    c_b\bigl(\xi_i-\widehat\pi_a(x_i)\bigr)_+
    +
    c_h\bigl(\widehat\pi_a(x_i)-\xi_i\bigr)_+
    \right].
\]

\begin{table}[tp!]
\centering
\caption{Out-of-sample costs for the single-product newsvendor experiment. {Here, $|\mathcal D|$, $k_1$, $\gamma$, and $\sigma$ denote the total sample size, context dimension, nonlinearity degree, and noise standard deviation, respectively.} Entries are mean $\pm$ standard deviation over 50 independent confirmation replications; lower values are better.}
\label{tab:newsvendor_unconstrained_confirmation50}
\resizebox{\textwidth}{!}{%
\begin{tabular}{lccccccc}
\toprule
Setting $(|\mathcal D|,k_1,\gamma,\sigma)$ & LRP-Log & LRP-Ent & LRP-Ptb & PADR & ER-SAA & J-SAA & J$^+$-SAA \\
\midrule
$(1000,20,3,1)$ & $\mathbf{\phantom{0}4.611 \pm 0.506}$ & $\phantom{0}4.731 \pm 0.560$ & $\phantom{0}4.758 \pm 0.535$ & $\phantom{0}6.719 \pm 0.981$ & $11.862 \pm 0.671$ & $11.858 \pm 0.648$ & $11.858 \pm 0.647$ \\
$(\phantom{0}500,20,3,1)$ & $\mathbf{\phantom{0}6.640 \pm 0.995}$ & $\phantom{0}6.995 \pm 1.256$ & $\phantom{0}6.823 \pm 1.084$ & $10.500 \pm 1.619$ & $12.105 \pm 0.804$ & $12.079 \pm 0.762$ & $12.082 \pm 0.768$ \\
$(2000,20,3,1)$ & $\mathbf{\phantom{0}3.612 \pm 0.196}$ & $\phantom{0}3.737 \pm 0.224$ & $\phantom{0}3.765 \pm 0.287$ & $\phantom{0}5.538 \pm 0.650$ & $11.604 \pm 0.428$ & $11.601 \pm 0.420$ & $11.601 \pm 0.420$ \\
$(1000,\phantom{0}6,3,1)$ & $\mathbf{\phantom{0}3.332 \pm 0.208}$ & $\phantom{0}3.435 \pm 0.237$ & $\phantom{0}3.488 \pm 0.284$ & $\phantom{0}3.808 \pm 0.252$ & $11.270 \pm 0.728$ & $11.625 \pm 0.634$ & $11.625 \pm 0.634$ \\
$(1000,20,1,1)$ & $\phantom{0}3.927 \pm 0.261$ & $\mathbf{\phantom{0}3.789 \pm 0.238}$ & $\phantom{0}3.893 \pm 1.048$ & $\phantom{0}3.996 \pm 0.630$ & $10.074 \pm 0.529$ & $10.071 \pm 0.516$ & $10.072 \pm 0.517$ \\
$(1000,20,5,1)$ & $\phantom{0}6.019 \pm 1.475$ & $\phantom{0}6.007 \pm 1.391$ & $\mathbf{\phantom{0}5.638 \pm 0.557}$ & $\phantom{0}7.897 \pm 1.257$ & $11.332 \pm 0.873$ & $11.600 \pm 0.759$ & $11.599 \pm 0.761$ \\
$(1000,20,3,3)$ & $\mathbf{10.921 \pm 0.758}$ & $10.996 \pm 0.884$ & $11.044 \pm 0.923$ & $11.926 \pm 0.901$ & $14.067 \pm 0.846$ & $14.070 \pm 0.827$ & $14.070 \pm 0.830$ \\
\bottomrule
\end{tabular}%
}
\end{table}

\begin{figure}[tp!]
    \centering
    \includegraphics[width=\linewidth]{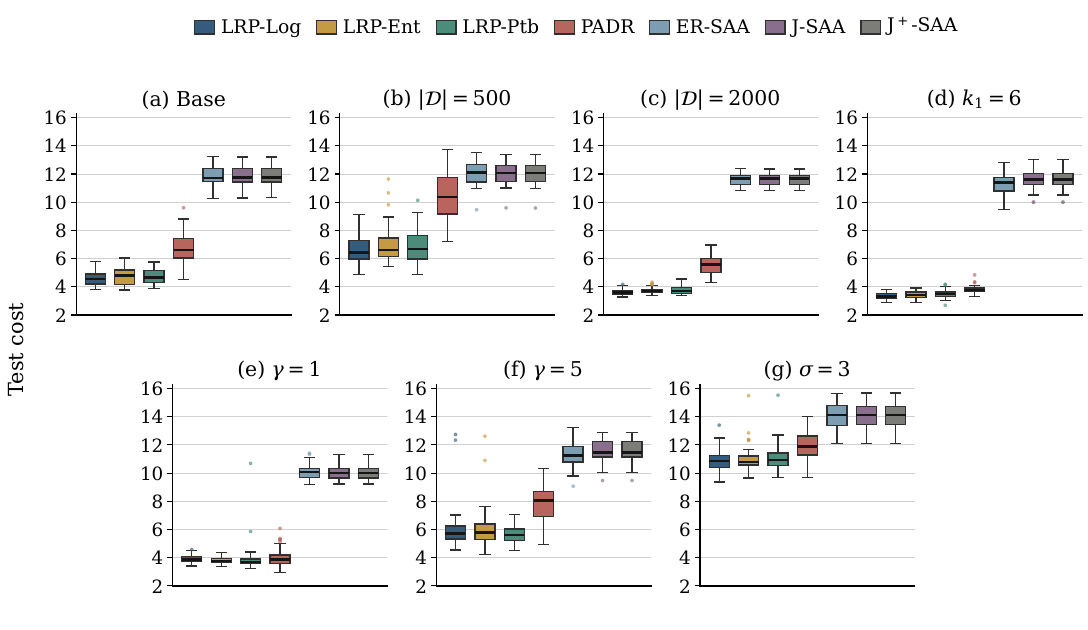}
    \caption{Out-of-sample cost distributions for the single-product newsvendor experiment. Each box summarizes test costs over 50 independent confirmation replications. Panel (a) shows the baseline setting $(|\mathcal D|,k_1,\gamma,\sigma)=(1000,20,3,1)$; panels (b)--(g) change only the parameter indicated in the panel title while holding the remaining parameters at their baseline values. Within each panel, methods follow the legend order from left to right. The center line is the median, the box spans the interquartile range, the whiskers extend to the most extreme observations within 1.5 interquartile ranges of the lower and upper quartiles, and points beyond the whiskers are shown individually; lower values are better.}
    \label{fig:newsvendor_unconstrained_boxplots}
\end{figure}

Table~\ref{tab:newsvendor_unconstrained_confirmation50} reports the test costs across the 50 confirmation replications for the newsvendor experiment under the specified values of $|\mathcal D|$, $k_1$, $\gamma$, and $\sigma$, while Figure~\ref{fig:newsvendor_unconstrained_boxplots} visualizes the distributions underlying these summary statistics. The seven panels use a common vertical scale: panel (a) is the baseline setting $(|\mathcal D|,k_1,\gamma,\sigma)=(1000,20,3,1)$, and each remaining panel changes only the parameter indicated in its title. 
Across all settings, the LRP variants substantially outperform the residual-based SAA methods and generally outperform PADR. Their advantage over PADR is particularly pronounced when the context dimension or the nonlinearity of the demand function increases. For example, the best LRP reduces the mean cost relative to PADR by approximately $12.5\%$ when $k_1=6$, compared with $31.4\%$ when $k_1=20$ in the baseline setting. Similarly, this reduction increases from approximately $5.2\%$ when $\gamma=1$ to $31.4\%$ and $28.6\%$ when $\gamma=3$ and $\gamma=5$, respectively. These findings are consistent with the expressive-power result in Section~\ref{sec:legendre_regularized_policies}: when combined with a flexible latent model, the LRP construction can represent rich nonlinear context-decision relationships while maintaining feasibility. The results also show that the LRP methods benefit substantially from larger sample sizes. Finally, although larger noise increases the costs of all methods, the LRP variants retain their performance advantage when $\sigma=3$.

\subsection{Resource Allocation with Box Constraints}
\label{subsec:experiment_resource_box}

The second experiment considers the two-stage resource allocation model used in the DD-SAA computational study of \citet{kannan2025data}, augmented with the first-stage box constraint $0\le w\le w^{\max}$, where $w^{\max}=100 \cdot \mathbf 1_n\in\mathbb R_{++}^n$. Let $\mathcal I$ denote the resource set, with $|\mathcal I|=n$, and let $\mathcal J$ denote the demand-class set. For a first-stage decision $w$ and realized uncertain parameter $\xi$, define
\[
    \ell_{\mathrm{ra}}(w;\xi)
    :=
    \bigl(c^{\mathrm{ra}}\bigr)^\top w
    +
    \mathcal R(w,\xi),
\]
where
\[
\begin{aligned}\mathcal R(w,\xi)
    =
    \min_{v,u}\left\{q^\top u:\sum_{j\in\mathcal J}v_{ij}\le \rho_i w_i,i\in\mathcal I,~\sum_{i\in\mathcal I}\kappa_{ij}v_{ij}+u_j\ge \xi_j,j\in\mathcal J,~v\ge 0, u\ge 0\right\}.
\end{aligned}
\]
The first-stage feasible set is
\[
    S_{\mathrm{ra}}^{\mathrm{box}}
    =
    \left\{
    w\in\mathbb R^n:
    0\le w\le w^{\max}
    \right\}.
\]

We use the same 20-resource, 30-demand-class base instance as in the DD-SAA computational study of~\citet{kannan2025data}, retaining its first-stage cost vector $c^{\mathrm{ra}}$, yield coefficients $\rho$, service-rate coefficients $\kappa$, recourse cost vector $q$, and demand-model parameter-generation procedure. Contexts are generated from the sparse folded-normal model of~\citet{kannan2025data}. Specifically, $x$ follows the multivariate folded-normal distribution used therein, only its first three coordinates are informative, and conditional demand is generated as
\[
    \xi_j
    = \alpha_j + \sum_{\ell=1}^{3} \beta_{j\ell}x_\ell^\gamma + \sigma\zeta_j,
    \qquad
    \zeta_j\stackrel{\mathrm{iid}}{\sim}\mathcal N(0,1),
    \qquad
    \zeta\perp x,
    \quad j\in\mathcal J.
\]
In each replication, the parameters $\alpha_j$, $\beta_{j\ell}$, and the folded-normal covariance matrix are generated independently according to the original procedure and then held fixed for all observations in that replication; the same generated instance and data split are used for all methods within that replication. The exponent $\gamma$ corresponds to the model-degree parameter denoted by $p$ in~\citet{kannan2025data}; we use $\gamma\in\{1,3,5\}$, corresponding to linear, cubic, and quintic dependence on the informative context coordinates. The parameter $\sigma\in\{5,15\}$ is the conditional noise standard deviation. We use the homoscedastic specification, so $\operatorname{Var}(\xi_j\mid x)=\sigma^2$ for every $j\in\mathcal J$. {Because the feasible region and regularizers are separable across resources, all three LRP maps admit coordinatewise closed-form expressions and can be evaluated efficiently. The explicit formulas are provided in Appendix~\ref{subsec:appendix_cf}.}


For each confirmation replication, the box-constrained resource allocation test cost is
\[
    \widehat{\operatorname{Cost}}_{\mathrm{ra}}^{\mathrm{box}}(a)
    =
    \frac{1}{|\mathcal D_{\mathrm{test}}|}
    \sum_{(x_i,\xi_i)\in\mathcal D_{\mathrm{test}}}
    \left[
    \bigl(c^{\mathrm{ra}}\bigr)^\top\widehat\pi_a(x_i)
    +
    \mathcal R\bigl(\widehat\pi_a(x_i),\xi_i\bigr)
    \right].
\]

Table~\ref{tab:resource_allocation_box_confirmation50} reports the test costs across the 50 confirmation replications for the resource allocation experiment with box constraints. A complementary distributional view of these results is provided in Figure~\ref{fig:resource_allocation_box_boxplots} in Appendix~\ref{sec:appendix_experiment}. Across six of the seven settings, an LRP variant achieves the lowest mean test cost. The advantage over the benchmark methods is particularly evident in settings with higher-dimensional contexts and nonlinear dependence.

\begin{table}[tp!]
\centering
\caption{Out-of-sample costs for resource allocation with box constraints. {Here, $|\mathcal D|$, $k_1$, $\gamma$, and $\sigma$ denote the total sample size, context dimension, nonlinearity degree, and noise standard deviation, respectively.} Entries are mean $\pm$ standard deviation over 50 independent confirmation replications; lower values are better.}
\label{tab:resource_allocation_box_confirmation50}
\resizebox{\textwidth}{!}{%
\begin{tabular}{lccccccc}
\toprule
Setting $(|\mathcal D|,k_1,\gamma,\sigma)$ & LRP-Log & LRP-Ent & LRP-Ptb & PADR & ER-SAA & J-SAA & J$^+$-SAA \\
\midrule
$(200,\phantom{0}3,3,\phantom{0}5)$ & $1387.6 \pm \phantom{0}237.1$ & $\mathbf{1378.1 \pm \phantom{0}235.2}$ & $1378.2 \pm \phantom{0}236.5$ & $1526.1 \pm \phantom{0}242.8$ & $1559.0 \pm \phantom{0}239.4$ & $1630.8 \pm \phantom{0}243.0$ & $1631.4 \pm \phantom{0}243.2$ \\
$(100,\phantom{0}3,3,\phantom{0}5)$ & $1493.0 \pm \phantom{0}360.7$ & $\mathbf{1438.0 \pm \phantom{0}357.6}$ & $1444.5 \pm \phantom{0}356.3$ & $1877.6 \pm \phantom{0}467.2$ & $1663.1 \pm \phantom{0}357.4$ & $1662.4 \pm \phantom{0}356.3$ & $1663.4 \pm \phantom{0}356.7$ \\
$(500,\phantom{0}3,3,\phantom{0}5)$ & $1417.9 \pm \phantom{0}136.7$ & $1490.6 \pm \phantom{0}145.0$ & $\mathbf{1403.4 \pm \phantom{0}140.4}$ & $1916.3 \pm \phantom{0}256.8$ & $1586.0 \pm \phantom{0}133.1$ & $1660.5 \pm \phantom{0}137.8$ & $1660.7 \pm \phantom{0}137.9$ \\
$(200,10,3,\phantom{0}5)$ & $1493.9 \pm \phantom{0}241.7$ & $\mathbf{1451.5 \pm \phantom{0}236.4}$ & $1983.3 \pm \phantom{0}293.5$ & $1842.1 \pm \phantom{0}289.2$ & $1595.8 \pm \phantom{0}240.7$ & $1653.3 \pm \phantom{0}244.7$ & $1654.1 \pm \phantom{0}245.2$ \\
$(200,\phantom{0}3,1,\phantom{0}5)$ & $\phantom{0}964.1 \pm \phantom{00}24.5$ & $\phantom{0}952.2 \pm \phantom{00}22.9$ & $\phantom{0}982.3 \pm \phantom{00}36.5$ & $1009.5 \pm \phantom{00}42.9$ & $\mathbf{\phantom{0}947.1 \pm \phantom{00}22.7}$ & $\mathbf{\phantom{0}947.1 \pm \phantom{00}22.7}$ & $\mathbf{\phantom{0}947.1 \pm \phantom{00}22.7}$ \\
$(200,\phantom{0}3,5,\phantom{0}5)$ & $5856.8 \pm 2773.4$ & $\mathbf{5822.5 \pm 2777.5}$ & $5888.4 \pm 2779.6$ & $6258.5 \pm 2783.1$ & $6465.3 \pm 2754.8$ & $6457.9 \pm 2754.3$ & $6462.2 \pm 2754.2$ \\
$(200,\phantom{0}3,3,15)$ & $1421.3 \pm \phantom{0}239.6$ & $1414.2 \pm \phantom{0}236.3$ & $\mathbf{1410.2 \pm \phantom{0}233.7}$ & $1548.4 \pm \phantom{0}247.8$ & $1564.7 \pm \phantom{0}238.8$ & $1633.7 \pm \phantom{0}244.2$ & $1634.3 \pm \phantom{0}244.4$ \\
\bottomrule
\end{tabular}%
}
\end{table}

\subsection{Resource Allocation with Group-Budget Constraints}
\label{subsec:experiment_resource_general}

To test more general feasible regions, the third experiment uses the same two-stage recourse model and demand generator as Section~\ref{subsec:experiment_resource_box}, but modifies the box-only first-stage feasible region by adding two group-budget constraints. With $|\mathcal I|=n=20$, define
\[
    \bar s_1
    =
    0.40\sum_{i=1}^{20}w_i^{\max},
    \qquad
    \bar s_2
    =
    0.45\sum_{i=1}^{20}w_i^{\max},
\]
and set
\[
    S_{\mathrm{ra}}^{\mathrm{gb}}
    =
    \left\{
    w\in\mathbb R^n:
    0\le w_i\le w_i^{\max}\ \forall i,\quad
    \sum_{i=1}^{10}w_i\le \bar s_1,\quad
    \sum_{i=11}^{20}w_i\le \bar s_2
    \right\}.
\]
With $w_i^{\max}=100$ for all $i$ and ten resources in each group, each group has aggregate box capacity $1000$; hence $\bar s_1=800$ and $\bar s_2=900$ equal $80\%$ and $90\%$ of their respective group capacities. This experiment assesses the methods under nonseparable linear first-stage constraints. The downstream loss and test-cost definition are the same as in Section~\ref{subsec:experiment_resource_box}; only the first-stage feasible set changes.

All three LRP variants use latent dimension $k=n$ and produce decisions in the original feasible region $S_{\mathrm{ra}}^{\mathrm{gb}}$. Details on evaluating and differentiating the corresponding optimization layers are provided in Appendix~\ref{subsec:appendix_cf}.

Table~\ref{tab:resource_allocation_general_confirmation50} presents the out-of-sample costs from 50 independent confirmation replications under group-budget constraints. The corresponding cost distributions are shown in Figure~\ref{fig:resource_allocation_group_budget_boxplots} in Appendix~\ref{sec:appendix_experiment}. Across six of the seven settings, $\mathsf{LRP}$-Ent achieves the lowest mean test cost among all methods. The only exception is the linear setting with $\gamma=1$, where the residual-based SAA methods perform slightly better. Overall, these results show that the LRP framework remains effective when the feasible region contains nonseparable group-budget constraints, with its advantage being particularly evident under complex context-demand relationships.

\begin{table}[tp!]
\centering
\caption{Out-of-sample costs for resource allocation with group-budget constraints. {Here, $|\mathcal D|$, $k_1$, $\gamma$, and $\sigma$ denote the total sample size, context dimension, nonlinearity degree, and noise standard deviation, respectively.} Entries are mean $\pm$ standard deviation over 50 independent confirmation replications; lower values are better.}
\label{tab:resource_allocation_general_confirmation50}
\resizebox{\textwidth}{!}{%
\begin{tabular}{lccccccc}
\toprule
Setting $(|\mathcal D|,k_1,\gamma,\sigma)$ & LRP-Log & LRP-Ent & LRP-Ptb & PADR & ER-SAA & J-SAA & J$^+$-SAA \\
\midrule
$(200,\phantom{0}3,3,\phantom{0}5)$ & $1499.0 \pm \phantom{0}262.4$ & $\mathbf{1458.2 \pm \phantom{0}262.1}$ & $1560.9 \pm \phantom{0}262.0$ & $1897.2 \pm \phantom{0}352.2$ & $1606.5 \pm \phantom{0}263.1$ & $1674.5 \pm \phantom{0}264.3$ & $1675.2 \pm \phantom{0}264.4$ \\
$(100,\phantom{0}3,3,\phantom{0}5)$ & $1543.5 \pm \phantom{0}399.8$ & $\mathbf{1518.0 \pm \phantom{0}392.6}$ & $1600.1 \pm \phantom{0}401.7$ & $1880.7 \pm \phantom{0}437.0$ & $1710.6 \pm \phantom{0}389.4$ & $1709.4 \pm \phantom{0}388.1$ & $1710.5 \pm \phantom{0}388.4$ \\
$(500,\phantom{0}3,3,\phantom{0}5)$ & $1487.8 \pm \phantom{0}151.6$ & $\mathbf{1474.0 \pm \phantom{0}151.2}$ & $1711.8 \pm \phantom{0}192.7$ & $1882.1 \pm \phantom{0}213.0$ & $1710.6 \pm \phantom{0}147.3$ & $1710.4 \pm \phantom{0}147.1$ & $1710.6 \pm \phantom{0}147.2$ \\
$(200,10,3,\phantom{0}5)$ & $1551.3 \pm \phantom{0}266.3$ & $\mathbf{1516.4 \pm \phantom{0}262.1}$ & $1567.7 \pm \phantom{0}265.5$ & $1835.0 \pm \phantom{0}293.5$ & $1696.0 \pm \phantom{0}266.5$ & $1694.6 \pm \phantom{0}264.5$ & $1695.5 \pm \phantom{0}265.1$ \\
$(200,\phantom{0}3,1,\phantom{0}5)$ & $\phantom{0}964.8 \pm \phantom{00}23.9$ & $\phantom{0}951.0 \pm \phantom{00}22.8$ & $\phantom{0}981.5 \pm \phantom{00}27.9$ & $1017.5 \pm \phantom{00}53.2$ & $\mathbf{\phantom{0}947.1 \pm \phantom{00}22.7}$ & $\mathbf{\phantom{0}947.1 \pm \phantom{00}22.7}$ & $\mathbf{\phantom{0}947.1 \pm \phantom{00}22.7}$ \\
$(200,\phantom{0}3,5,\phantom{0}5)$ & $6050.3 \pm 2809.6$ & $\mathbf{6032.3 \pm 2812.3}$ & $6381.6 \pm 2811.8$ & $6379.8 \pm 2808.9$ & $6563.0 \pm 2797.1$ & $6553.7 \pm 2796.6$ & $6558.8 \pm 2796.5$ \\
$(200,\phantom{0}3,3,15)$ & $1509.7 \pm \phantom{0}261.1$ & $\mathbf{1486.0 \pm \phantom{0}265.1}$ & $1531.0 \pm \phantom{0}262.7$ & $1569.5 \pm \phantom{0}258.6$ & $1677.7 \pm \phantom{0}265.7$ & $1677.4 \pm \phantom{0}265.4$ & $1678.1 \pm \phantom{0}265.5$ \\
\bottomrule
\end{tabular}%
}
\end{table}

\section{Concluding Remarks}
This paper introduces Legendre-regularized policies for learning contextual decisions under hard constraints. Our results demonstrate that an appropriate regularizer can simultaneously enforce hard constraints, provide stable gradients, and preserve the expressive power of the underlying prediction model. Several directions merit further investigation. One direction is to develop scalable algorithms for evaluating and differentiating Legendre-regularized optimization layers in large-scale problems. Another is to study how the choice of the regularizer and smoothing parameter affects statistical performance and computational efficiency. Extending the framework to settings with nonconvex or discrete feasible regions is also an important direction.



\bibliographystyle{plainnat}
\bibliography{reference}

\clearpage
\begin{APPENDICES}
\section{Additional Experimental Details and Results}\label{sec:appendix_experiment}

\subsection{Computational Environment.}\label{subsec:appendix_computation}

All experiments were implemented in Python~3.11 using PyTorch~2.7.1 and run without GPU acceleration on a server equipped with two Intel Xeon Platinum 8575C processors and approximately 378~GiB of usable memory. The PADR optimization subproblems were solved using Gurobi~12.0.3, whereas the recourse and deterministic-equivalent SAA problems in the resource allocation experiments were solved using HiGHS~1.8.0 through \texttt{scipy.optimize.linprog}.

\subsection{Hyperparameter Search Domains.}\label{subsec:appendix_tuning}

Table~\ref{tab:hyperparameter_domains} reports the search domains used by the tuning procedure. For $\mathsf{LRP}$-Ptb, $\tau_0$ and $\tau_{\min}$ denote the initial and minimum Gaussian perturbation standard deviations, respectively.

\begin{table}[ht!]
\centering
\small
\caption{Hyperparameter search domains used in the numerical experiments. Intervals are continuous unless a parameter is explicitly identified as integer-valued.}
\label{tab:hyperparameter_domains}
\renewcommand{\arraystretch}{1.05}
\begin{tabular}{@{}>{\raggedright\arraybackslash}p{0.20\textwidth}@{\hspace{0.02\textwidth}}>{\raggedright\arraybackslash}p{0.78\textwidth}@{}}
\toprule
Method and experiment & Search domain \\
\midrule
$\mathsf{LRP}$ variants: newsvendor & \emph{Smoothing levels:} $\tau_0\in[10^{-2},50]$, $\tau_{\min}\in[10^{-6},\tau_0]$.\newline \emph{Smoothing schedule:} $\delta\in[0.5,1]$, integer update interval $s\in\{1,\ldots,1000\}$.\newline \emph{Optimization:} learning rate in $[10^{-3},1]$, weight decay in $[0,2]$.\newline \emph{Perturbation computation:} $\mathsf{LRP}$-Ptb is evaluated using the exact Gaussian expectation. \\
\addlinespace[3pt]
$\mathsf{LRP}$ variants: resource allocation & \emph{Smoothing levels:} $\tau_0\in[10^{-2},10]$, $\tau_{\min}\in[10^{-6},\tau_0]$.\newline \emph{Smoothing schedule:} $\delta\in[0.5,1]$, integer update interval $s\in\{1,\ldots,1000\}$.\newline \emph{Optimization:} learning rate in $[10^{-4},10^{-1}]$, weight decay in $[0,2]$.\newline \emph{Perturbation computation:} Under box constraints, $\mathsf{LRP}$-Ptb is evaluated using the exact Gaussian expectation. Under group-budget constraints, the expectation is estimated using 50 independently drawn Gaussian perturbations per input, resampled at each layer evaluation. \\
\addlinespace[3pt]
PADR: all experiments & \emph{Policy complexity:} $(K_1,K_2)\in\{0,\ldots,40\}^2\setminus\{(0,0)\}$.\newline \emph{Active-piece control:} $\epsilon,\epsilon_{\mathrm{sh}}\in\{0,\ldots,3000\}$, shrinking ratio in $[0,1]$.\newline \emph{ESMM run control:} number of random starts in $\{1,\ldots,10\}$, number of iterations in $\{1,\ldots,100\}$.\newline \emph{Sampling schedule:} $\alpha\in\{1,\ldots,500\}$, $\beta,N_0\in\{0,\ldots,20\}$.\newline \emph{Coefficient control:} coefficient bound $\mu\in[10^{-3},100]$, proximal weight $\eta\in[0,1]$.\newline \emph{Subsampling:} sampling indicator in $\{\mathrm{false},\mathrm{true}\}$. \\
\addlinespace[3pt]
Residual-based SAA & \emph{Regression model:} ordinary least squares for ER-SAA, J-SAA, and J${}^+$-SAA.\newline \emph{Conditional scale:} ER-SAA enumerates the identity and estimated diagonal specifications; J-SAA and J${}^+$-SAA use the identity specification. \\
\bottomrule
\end{tabular}
\end{table}

\subsection{Benchmark Method Details.}\label{subsec:appendix_benchmark}

\paragraph{PADR.} 
Let $\bar x=(x,1)$ denote the context augmented with an intercept. PADR parameterizes each coordinate of its raw decision as
\[
\widetilde\pi_{\mathrm{PADR},i}(x)
=
\max_{r=1,\ldots,K_1}a_{ir}^\top\bar x
-
\max_{s=1,\ldots,K_2}b_{is}^\top\bar x,
\qquad i=1,\ldots,n,
\]
where $K_1$ and $K_2$ are the numbers of max-affine pieces; when either number is zero, the corresponding term is omitted. The coefficients are estimated using the enhanced stochastic majorization-minimization algorithm of~\citet{zhang2026data}, which repeatedly selects $\epsilon$-active affine pieces and solves the resulting convex majorization subproblem. The structural and algorithmic parameters are selected using the common tuning protocol described in Section~\ref{subsec:experiment_protocol}. In the resource allocation experiments, violations of the first-stage constraints are penalized during fitting, and the resulting raw decisions are restored before the tuning-set and test-set costs are computed.

\paragraph{DD-SAA.}
We then present the details of the residual-based SAA family of methods. Let $\widehat f_{\mathrm{tr}}$ and $\widehat Q_{\mathrm{tr}}$ denote the conditional-mean and conditional-scale models fitted on $\mathcal D_{\mathrm{tr}}$, let $\widehat f^{-i}$ and $\widehat Q^{-i}$ denote the corresponding leave-one-out fits obtained after omitting observation $i$, and let $\Xi:=\mathbb R_+^{k_2}$ denote the admissible uncertainty set used to restore the residual scenarios. Define
\[
    \widehat e_i^{\mathrm{ER}}
    =
    \bigl[\widehat Q_{\mathrm{tr}}(x_i)\bigr]^{-1}
    \bigl(\xi_i-\widehat f_{\mathrm{tr}}(x_i)\bigr),
    \qquad
    \widehat e_i^{\mathrm{loo}}
    =
    \bigl[\widehat Q^{-i}(x_i)\bigr]^{-1}
    \bigl(\xi_i-\widehat f^{-i}(x_i)\bigr).
\]
The corresponding conditional scenarios are
\[
\begin{aligned}
    \widehat\xi_i^{\mathrm{ER}}(x)
    &=
    \operatorname{proj}_{\Xi}\left(
        \widehat f_{\mathrm{tr}}(x)
        +
        \widehat Q_{\mathrm{tr}}(x)\widehat e_i^{\mathrm{ER}}
    \right),\\
    \widehat\xi_i^{J}(x)
    &=
    \operatorname{proj}_{\Xi}\left(
        \widehat f_{\mathrm{tr}}(x)
        +
        \widehat Q_{\mathrm{tr}}(x)\widehat e_i^{\mathrm{loo}}
    \right),\\
    \widehat\xi_i^{J+}(x)
    &=
    \operatorname{proj}_{\Xi}\left(
        \widehat f^{-i}(x)
        +
        \widehat Q^{-i}(x)\widehat e_i^{\mathrm{loo}}
    \right).
\end{aligned}
\]
Setting $\widehat Q_{\mathrm{tr}}\equiv\widehat Q^{-i}\equiv I_{k_2}$ for every $i$ yields the homoscedastic specialization, in which the scenarios are formed by adding empirical or leave-one-out residuals to the corresponding point predictions. In all reported experiments, $\widehat f_{\mathrm{tr}}$ and $\widehat f^{-i}$ are ordinary least-squares regressions with intercepts and all $k_1$ context coordinates. J-SAA and J${}^+$-SAA use the homoscedastic specialization, whereas ER-SAA tunes between the identity scale and an estimated diagonal scale. The projection onto $\Xi$ is coordinatewise truncation at zero.
Given scenarios $\{\widehat\xi_i^a(x)\}_{i=1}^{|\mathcal D_{\mathrm{tr}}|}$ for $a\in\{\mathrm{ER},J,J+\}$, the corresponding decision satisfies
\[
    \widehat\pi_a(x)
    \in
    \arg\min_{w\in S}
    \frac{1}{|\mathcal D_{\mathrm{tr}}|}
    \sum_{i=1}^{|\mathcal D_{\mathrm{tr}}|}
    \ell\bigl(w;\widehat\xi_i^a(x)\bigr).
\]

\subsection{Computation of the LRP Optimization Layers}\label{subsec:appendix_cf}

\paragraph{Newsvendor problem.} For the one-dimensional box $S_{\mathrm{nv}}=[0,w^{\max}]$, all three regularized optimizer maps have closed-form expressions. For a latent scalar $z\in\mathbb R$ and smoothing parameter $\tau>0$, they are
\begin{align*}
    w_{I_1,\phi_{\log}}(z)
    &=
    \frac{2\tau w^{\max}}
    {w^{\max}z+2\tau+\sqrt{(w^{\max}z)^2+4\tau^2}},\\
    w_{I_1,\phi_{\mathrm{ent}}}(z)
    &=
    \frac{w^{\max}}
    {1+\exp(z/\tau)},\\
    w_{\mathrm{ptb}}(z)
    &=
    w^{\max}\Phi(-z/\tau). 
\end{align*}
The first two expressions are the minimizers induced by $\phi_{\log}$ and $\phi_{\mathrm{ent}}$, respectively. For $\mathsf{LRP}$-Ptb, the perturbed linear oracle is evaluated at $z+Z$ with $Z\sim\mathcal N(0,\tau^2)$, so its population expectation is $w^{\max}\Phi(-z/\tau)$.

\paragraph{Resource allocation with box constraints.} For $S_{\mathrm{ra}}^{\mathrm{box}}$, we take $F=I_n$, so $k=n$, and all three LRP maps are separable across resources. For a latent vector $z\in\mathbb R^n$ and each $i\in\mathcal I$, the coordinate maps are
\begin{align*}
    \bigl[w_{I_n,\phi_{\log}}(z)\bigr]_i
    &=
    \frac{2\tau w_i^{\max}}{w_i^{\max}z_i+2\tau+\sqrt{(w_i^{\max}z_i)^2+4\tau^2}},\\
    \bigl[w_{I_n,\phi_{\mathrm{ent}}}(z)\bigr]_i
    &=
    \frac{w_i^{\max}}{1+\exp(z_i/\tau)},\\
    \bigl[w_{\mathrm{ptb}}(z)\bigr]_i
    &=
    w_i^{\max}\Phi(-{z_i}/{\tau}).
\end{align*}
The first two expressions apply $\phi_{\log}$ and $\phi_{\mathrm{ent}}$ coordinatewise. For $\mathsf{LRP}$-Ptb, the perturbed box oracle selects $w_i^{\max}$ when $z_i+Z_i<0$ and zero otherwise, where $Z_i\stackrel{\mathrm{iid}}{\sim}\mathcal N(0,\tau^2)$, so the displayed formula is the exact population expectation.

\paragraph{Resource allocation with group-budget constraints.} Let $\mathcal I_1=\{1,\ldots,10\}$ and $\mathcal I_2=\{11,\ldots,20\}$. Because the group-budget constraints couple resource coordinates, the corresponding LRP maps are not coordinatewise separable. 

For $\mathsf{LRP}$-Log, we apply the logarithmic barrier directly to all box and group-budget inequalities defining $S_{\mathrm{ra}}^{\mathrm{gb}}$. The optimizer is computed by a damped Newton iteration from a strict interior point, and its vector--Jacobian product is obtained by differentiating the first-order optimality conditions. 

For $\mathsf{LRP}$-Ent, we use the extended feasible set
\[
S_{\mathrm{ra}}^{\mathrm{ext}}
=
\left\{
w^{\mathrm{ext}}=(w^\top,s_1,s_2)^\top:
0\leq w_i\leq w_i^{\max}\ \forall i\in\mathcal I,\quad
0\leq s_g\leq\bar s_g,\quad
\sum_{i\in\mathcal I_g}w_i+s_g=\bar s_g,\quad
g=1,2
\right\}.
\]
Every $w\in S_{\mathrm{ra}}^{\mathrm{gb}}$ has the unique extension $s_g=\bar s_g-\sum_{i\in\mathcal I_g}w_i$, so projection onto the first $n$ coordinates is a bijection onto $S_{\mathrm{ra}}^{\mathrm{gb}}$. This representation converts the group-budget inequalities into affine inequalities while retaining the bounded-coordinate structure used by the entropic layer. We take $F^{\mathrm{ext}} = (I_n, 0_{n\times 2})^\top$, so the latent dimension remains $k=n$, and the slack coordinates have zero linear coefficients, and the policy returns the first $n$ coordinates. Conditional on the equality multipliers $\lambda=(\lambda_1,\lambda_2)$, the bounded-variable entropic optimizer satisfies
\[
\begin{aligned}
w_i(\lambda_g)
&=
\frac{w_i^{\max}}
{1+\exp\left((z_i+\lambda_g)/\tau\right)},
&& i\in\mathcal I_g,\\
s_g(\lambda_g)
&=
\frac{\bar s_g}
{1+\exp\left(\lambda_g/\tau\right)},
&& g=1,2.
\end{aligned}
\]
The multipliers are computed by a damped Newton iteration to enforce the two equality constraints, and the vector--Jacobian product is obtained by differentiating this equality-constrained KKT system. 

For $\mathsf{LRP}$-Ptb, each perturbed linear oracle decomposes into two continuous-knapsack problems and is solved exactly by a groupwise greedy algorithm. Both the perturbation expectation and its vector--Jacobian product are estimated from the same 50 freshly drawn Gaussian perturbations using the Gaussian score estimator.

\subsection{Additional Results}\label{subsec:appendix_plot}
 Figures~\ref{fig:resource_allocation_box_boxplots} and~\ref{fig:resource_allocation_group_budget_boxplots} display the distributions underlying Tables~\ref{tab:resource_allocation_box_confirmation50} and~\ref{tab:resource_allocation_general_confirmation50}, respectively. Both figures use the same experimental settings and method ordering as the corresponding tables.

\begin{figure}[h!]
    \centering
    \includegraphics[width=\linewidth]{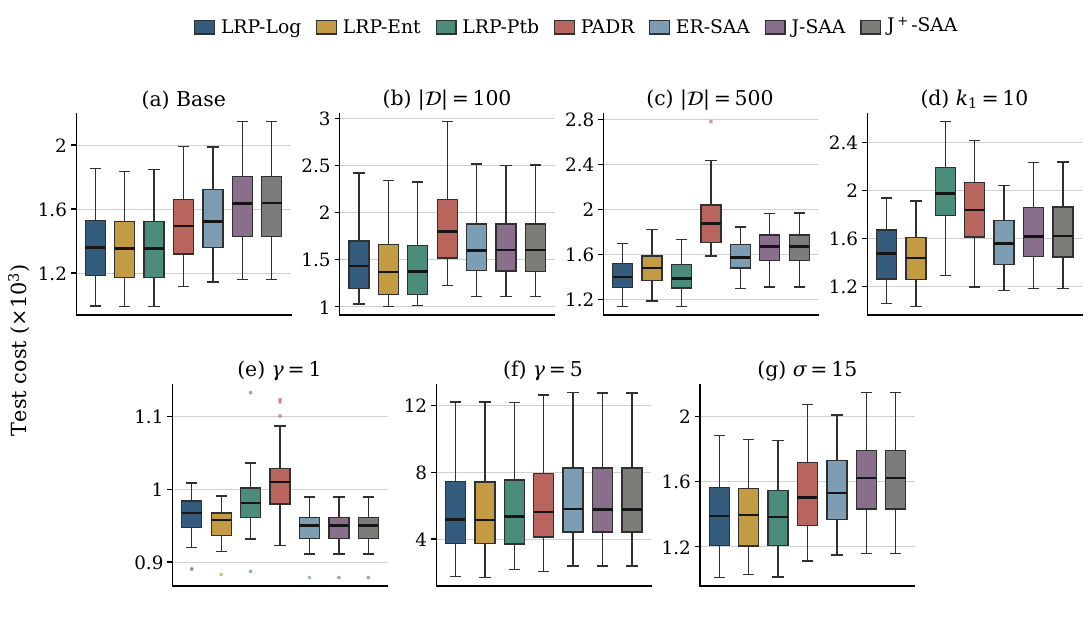}
    \caption{Out-of-sample cost distributions for the resource allocation experiment with box constraints. Each box summarizes test costs over 50 independent confirmation replications. Panel (a) shows the baseline setting $(|\mathcal D|,k_1,\gamma,\sigma)=(200,3,3,5)$; panels (b)--(g) change only the parameter indicated in the panel title while holding the remaining parameters at their baseline values. Within each panel, methods follow the legend order from left to right. Each panel uses its own vertical scale, with test costs reported in thousands. The center line is the median, the box spans the interquartile range, the whiskers extend to the most extreme observations within 1.5 interquartile ranges of the lower and upper quartiles, and points beyond the whiskers are shown individually; lower values are better.}
    \label{fig:resource_allocation_box_boxplots}
\end{figure}

\begin{figure}[h!]
    \centering
    \includegraphics[width=\linewidth]{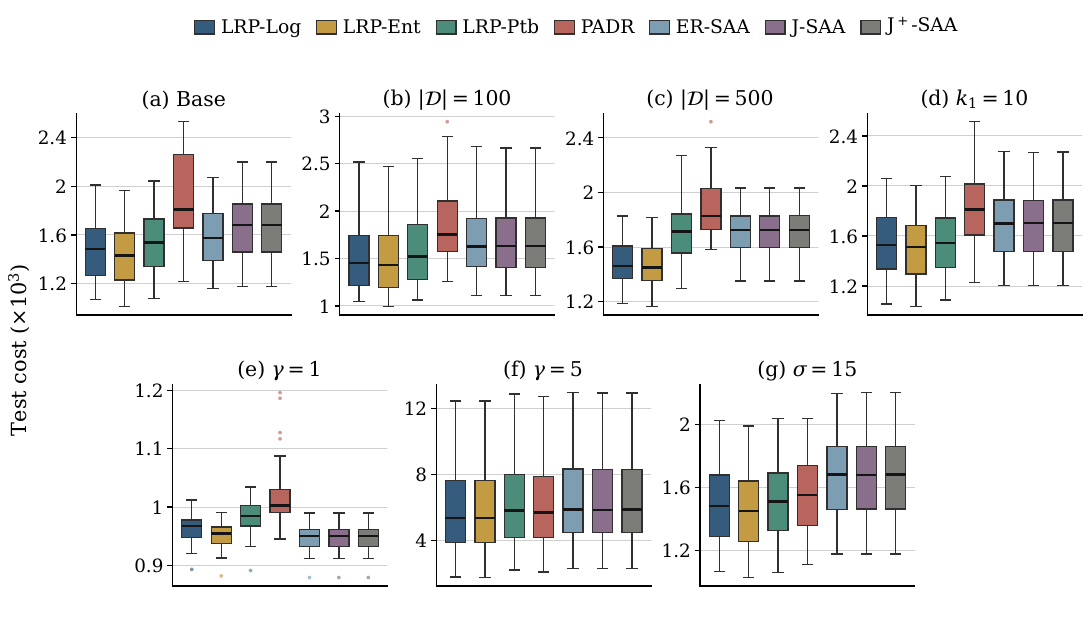}
    \caption{Out-of-sample cost distributions for the resource allocation experiment with group-budget constraints. Each box summarizes test costs over 50 independent confirmation replications. Panel (a) shows the baseline setting $(|\mathcal D|,k_1,\gamma,\sigma)=(200,3,3,5)$; panels (b)--(g) change only the parameter indicated in the panel title while holding the remaining parameters at their baseline values. Within each panel, methods follow the legend order from left to right. Each panel uses its own vertical scale, with test costs reported in thousands. The center line is the median, the box spans the interquartile range, the whiskers extend to the most extreme observations within 1.5 interquartile ranges of the lower and upper quartiles, and points beyond the whiskers are shown individually; lower values are better.}
    \label{fig:resource_allocation_group_budget_boxplots}
\end{figure}

\end{APPENDICES}

\end{document}